\newcommand{\cref}[1]{\ref{#1}}
\newcommand{\arras}{\,\,\stackrel{{\rm a.s.}}{\longrightarrow}\,\,}
\let\csname equation*\endcsname\relax
\let\csname endequation*\endcsname\relax
\DeclareMathOperator*{\argmin}{arg\,min}                   
\newcommand{\rrls}{\texttt{rrls}\xspace}
\newcommand{\sTik}{\texttt{sTik}\xspace}
\newcommand{\slimTik}{\texttt{slimTik}\xspace}
\begin{document}

\title[Sampled Tikhonov Regularization for Large Linear Inverse Problems]{Sampled Tikhonov Regularization for Large Linear Inverse Problems}

\author[cor1]{J. Tanner Slagel}
\address{Department of Mathematics, Virginia Tech, Blacksburg, VA}
\ead{slagelj@vt.edu}

\author{Julianne Chung}
\address{Department of Mathematics, Computational Modeling and Data Analytics Division, Academy of Integrated Science, Virginia Tech, Blacksburg, VA}
\ead{jmchung@vt.edu}

\author{Matthias Chung}
\address{Department of Mathematics, Computational Modeling and Data Analytics Division, Academy of Integrated Science, Virginia Tech, Blacksburg, VA}
\ead{mcchung@vt.edu}

\author{David Kozak}
\address{Applied Mathematics and Statistics, Colorado School of Mines, Golden, CO}
\ead{dkozak@mines.edu}

\author{Luis Tenorio}
\address{Applied Mathematics and Statistics, Colorado School of Mines, Golden, CO}
\ead{ltenorio@mines.edu}

\begin{abstract}
  In this paper, we investigate iterative methods that are based on sampling of the data for computing Tikhonov-regularized solutions.  We focus on very large inverse problems where access to the entire data set is not possible all at once (e.g., for problems with streaming or massive datasets).  Row-access methods provide an ideal framework for solving such problems, since they only require access to ``blocks'' of the data at any given time. However, when using these iterative sampling methods to solve inverse problems, the main challenges include a proper choice of the regularization parameter, appropriate sampling strategies, and a convergence analysis. To address these challenges, we first describe a family of sampled iterative methods that can incorporate data as they become available (e.g., randomly sampled).  We consider two sampled iterative methods, where the iterates can be characterized as solutions to a sequence of approximate Tikhonov problems. The first method requires the regularization parameter to be fixed a priori and converges asymptotically to an unregularized solution for randomly sampled data.  This is undesirable for inverse problems.  Thus, we focus on the second method where the main benefits are that the regularization parameter can be updated during the iterative process and the iterates converge asymptotically to a Tikhonov-regularized solution. We describe adaptive approaches to update the regularization parameter that are based on sampled residuals, and we describe a limited-memory variant for larger problems.  Numerical examples, including a large-scale super-resolution imaging example, demonstrate the potential for these methods.
\end{abstract}

\section{Introduction} \label{sec:Intro}
There have been significant developments in variational methods for solving large inverse problems \cite{ghattas13}. However, with recent advances in imaging technologies and new applications to computer vision and machine learning, datasets are becoming so large that existing methods, which often follow an ``all-at-once'' approach for processing the data, are no longer feasible.  Instead, we consider randomized or sampling methods where only ``blocks'' of the data are required at a given time.  Such methods are ideal for streaming problems, where data are generated or collected during the process of solution computation.

In this paper, we focus on linear inverse problems of the form,
\begin{equation*}
  \bfb = \bfA\bfx_{\rm true} + \bfepsilon,
\end{equation*}
where $\bfx_{\rm true} \in \bbR^n$ contains the desired, unknown parameters, $\bfA \in \bbR^{m \times n}$ models the data acquisition process, $\bfb \in \bbR^m$ contains the observed data (which may be streaming), and $\bfepsilon\in \bbR^m$ represents noise or errors in the data. We assume that $\bfepsilon$ has mean zero and a finite second moment.  In the generic setup, the goal of the inverse problem is to estimate $\bfx_{\rm true}$, given a model $\bfA$ and observations $\bfb$. Typically, the matrix $\bfA$ represents a discrete and linear version of a given model stemming for instance from a discretized PDE network, integral equation, or regression model \cite{hansen2006deblurring, mueller2012linear}.  For the problems of interest, $m$ and $n$ may be so large that accessing and/or storing all rows of $\bfA$ at once is infeasible.

In this work we consider \emph{ill-posed} inverse problems where regularization is required to compute reasonable solutions.
Here, we focus on solving the Tikhonov-regularized problem,
\begin{equation}\label{eq:ip}
  \min_{\bfx} f(\bfx) = \norm[2]{\bfA\bfx - \bfb}^2 + \lambda \norm[2]{\bfL\bfx}^2,
\end{equation}
where $\lambda >0$ is the regularization parameter, and for simplicity we assume that $\bfL$ has full column rank.
For the scenario where all of $\bfb$ and $\bfA$ are available or can be accessed at once (e.g., via matrix-vector multiplication with $\bfA$), the Tikhonov solution,
\begin{equation*}
  \bfx(\lambda) = (\bfA\t \bfA + \lambda\bfL\t\bfL)^{-1}\bfA\t \bfb\,,
\end{equation*}
can be computed using a plethora of existing iterative methods (e.g., Krylov or other optimization methods \cite{hansen2010discrete,kaipio2006statistical}). Note that $\bfx(0)$ is the unregularized solution which is defined if $\bfA$ has full column rank.

For the problems of interest, we consider sampled iterative methods of the form,
\begin{equation}\label{eq:iter}
  \bfx_k = \bfx_{k-1} - \bfB_k \bfg_k(\bfx_{k-1}), \qquad k \in \bbN,
\end{equation}
where $\bfx_0$ is an initial iterate, $\bfg_k(\bfx_{k-1})$ is a vector (carrying gradient information of the least squares problem), matrix $\bfB_k\in \bbR^{n \times n}$ is updated at each iteration (carrying curvature information of the least squares problem). A learning rate or line search parameter is not required in this case and is set to its ``natural'' value of 1, \cite{bottou2018optimization}.
Specific choices for $\bfB_k$ and $\bfg_k$ will be described in \cref{sec:rrls}, with connections to other known stochastic approximation methods described in \cref{sec:SO}.

Note that iterative methods of this form typically stem from nonlinear optimization problems where $\bfB_k$ is an approximation to the inverse Hessian and contains curvature information and $\bfg_k$ is the gradient at the current iterate $\bfx_{k-1}$ \cite{Nocedal2006}. Such methods would take only one step to converge for the linear problem (e.g., take $\bfx_0=\bf0$, $\bfB_1^{-1} = \bfA\t\bfA+\lambda\bfL\t\bfL$, and $\bfg_1 =\bfA\t\bfb$).  However, this is not possible if $m$ and $n$ are so large that not all information is available at a certain time or fits into computer memory.  Furthermore, determining a suitable choice of $\lambda$
can be computationally infeasible in such settings, and the information available, i.e., $\bfB_k$ and $\bfg_k$, may be subject to noise or other uncertainties. Thus, we consider nonlinear methods of the form \eqref{eq:iter} for Tikhonov regularization with massive data, where the main benefits are that (i) the data is sampled (e.g., randomly) or streamed, (ii) the regularization parameter can be adapted, and (iii) the methods converge asymptotically and in one epoch to a Tikhonov-regularized solution. Sophisticated regularization parameter selection methods are well-established if the full system is available (for example, see \cite{kilmer2001choosing,renaut2017hybrid}); however, the ability to update the regularization parameter within iterative methods of the form~\eqref{eq:iter} while also ensuring convergence of iterates to a regularized solution is, to the best of our knowledge, an unresolved problem.

\paragraph{Problem Formulation.}
In the following, we describe a mathematical formulation of the problem that allows us to solve~\eqref{eq:ip} in situations where
samples of $\bfA$ and $\bfb$ become available over time.
Such scenarios are common in medical imaging, e.g. in tomography where data is being processed as it is being collected \cite{andersen2014generalized}, and in astronomy, e.g. in super-resolution imaging where a high-resolution image is constructed from low-resolution images that are being video streamed \cite{huang2008three}.

Formally, at the $k$-th iteration, we assume that a set of rows of $\bfA$ and corresponding elements of $\bfb$ become available, which we denote by $\bfW_k\t\bfA$ and $\bfW_k\t\bfb$ respectively. Here the matrix $\bfW_k \in \bbR^{m \times \ell}$ can be seen as a \emph{sampling} matrix, which selects rows of $\bfA$ and $\bfb$.
For a fixed $M \in \mathbb{N}$ we assume that matrices $\{\bfW_{i}\}_{i=1}^{M}$  satisfy the following properties:
\begin{enumerate}
\item for each $i\in \{1,\ldots,M\}$, $\bfW_{i} \in \mathbb{R}^{m \times \ell}$, where $\ell = \frac{m}{M}$\footnote{To avoid a notational distraction, we assume all matrices $\bfW_i$ are of the same dimension and $\ell M = m$; hence, $\ell\in \bbN$. However a generalization with different matrix sizes $\bfW_i \in \bbR^{m\times\ell_i}$ is straightforward.} and
\item the sum $\sum_{i=1}^{M} \bfW_{i}\bfW\t_{i}= \bfI_{m}$.
\end{enumerate}
The first assumption implies that the size of $\bfW\t_{i}\bfA$
is smaller than the size of $\bfA$, and thus computationally manageable. The second assumption guarantees that all rows of $\bfA$ are given equal weight overall.

Notice that if $\bfW_k$ is sparse with only a few non-zero elements in a subset of the $m$ columns, $\bfW_k\t\bfA$ extracts only rows of $\bfA$ where $\bfW_k$ has nonzero entries. Hence, these methods are commonly known as row action methods \cite{escalante2011alternating,andersen2014generalized}. Randomized or sketching methods are also related in that a single realization of $\bfW_k$ is used to project a large system onto a small dimensional subspace \cite{drineas2012fast,pilanci2016iterative}.
However, these methods typically require access to all of the data at once (e.g. $\bfW_{k}$ is not sparse).

\paragraph{Overview and Outline.} In this paper, we describe iterative sampling methods for solving Tikhonov-regularized problems, where the main distinction from existing methods such as hybrid Krylov methods and iterated Tikhonov methods is that we do not require ``all-at-once'' access to the forward model.  In terms of theoretical results, the main contributions include the characterization of iterates as solutions to partial or full Tikhonov problems and asymptotic convergence results.  In terms of methodology, we highlight the sampled Tikhonov method where the regularization parameter can be updated during the iterative process and the iterates are Tikhonov-regularized solutions after each epoch of data. Additionally, the sampled Tikhonov method converges asymptotically to a Tikhonov-regularized solution.  Other developments include methods for updating the regularization parameter using sampled data and limited-memory variants for problems with many unknowns.

The paper is organized as follows.  In Section~\ref{sec:rrls} we describe two iterative methods for Tikhonov regularization with sampling.  Various theoretical results are provided, including asymptotic convergence results.  In Section~\ref{sec:regparselection} we describe sampled regularization parameter selection methods that can be used to update and adapt the regularization parameter.  Numerical illustrations are provided throughout, and a limited-memory variant of these methods is described in Section~\ref{sec:numerics}, along with results for a large-scale imaging problem. Conclusions and future work are discussed in Section~\ref{sec:conclusions}.

\section{Iterative sampling methods for Tikhonov regularization}
\label{sec:rrls}
Iterative sampling methods for Tikhonov regularization can be used to solve massive linear inverse problems.  We will investigate two methods.
Let $\bfy_0,\,\bfx_0\in \bbR^n$ be initial iterates and let $\bfW_i \in \bbR^{m \times \ell}$, $i =1,\ldots, k$ be arbitrary matrices. For notational convenience, we denote $\bfA_{i} = \bfW\t_{i}\bfA$ and $\bfb_{i} = \bfW\t_{i}\bfb$.  Assuming a fixed regularization parameter $\lambda$, the first method that we consider is \emph{regularized recursive least squares} (\texttt{rrls}), which is defined as
\begin{equation} \label{eq:sTik1}
  \bfy_k = \bfy_{k-1} - \bfB_k\bfA_{k}\t(\bfA_{k}\bfy_{k-1}-\bfb_{k}),\quad k \in \bbN,
\end{equation}
where $\bfB_k = \left( \lambda\bfL\t\bfL + \sum_{i=1}^{k}\bfA_{i}\t\bfA_{i}\right)^{-1}$.
If $\bfW_i$ is the $i$-th column of the identity matrix, \texttt{rrls} is an extension of the recursive least squares algorithm \cite{bjorck1996numerical} that includes a Tikhonov term. Since it may be difficult to know a good regularization parameter in advance, we propose a \emph{sampled Tikhonov} (\texttt{sTik}) method, where the iterates are defined as
\begin{equation} \label{eq:sTik2}
  \bfx_k = \bfx_{k-1} - \bfB_k\left(\bfA\t_{k}(\bfA_{k}\bfx_{k-1}-\bfb_{k})+\Lambda_k\bfL\t\bfL\bfx_{k-1}\right),\quad k \in \bbN,
\end{equation}
where $\bfB_k = \left( \sum_{i=1}^{k}\Lambda_i\bfL\t\bfL + \sum_{i=1}^{k}\bfA_{i}\t\bfA_{i}\right)^{-1}.$  Compared to \texttt{rrls}, the main advantages of the \texttt{sTik} method are that the regularization parameter can be updated during the iterative process and that in a sampled framework, the \texttt{sTik} iterates converge asymptotically to a Tikhonov solution whereas the \texttt{rrls} iterates converge asymptotically to an unregularized solution.
Of course, selecting a good regularization parameter can be difficult, especially for problems with a small range of good values.  However, if a good parameter estimate is available or can be estimated (see Section~\ref{sec:regparselection}), it is desirable that the numerical method for solution computation converges to a regularized solution.

In this section, we begin by showing that for arbitrary matrices $\bfW_i$, both \texttt{rrls} and \texttt{sTik} iterates can be recast as solutions to regularized least squares problems (c.f. Theorem~\ref{thm:sqnrls}). See \ref{sec:appendix1} for proofs for all theorems from Section~\ref{sec:rrls}.

\begin{theorem}
\label{thm:sqnrls}
  Let $\bfA \in \bbR^{m \times n}$ and $\bfb \in \bbR^{m}$.  Let $\bfL\in\bbR^{s\times n}$ have full column rank and $\bfW_i \in \bbR^{m \times \ell}$, $i =1,\ldots, k$ be an arbitrary sequence of matrices.
  \begin{itemize}
    \item[(i)] For $\lambda >0$ and $\bfy_0\in \bbR^n$ arbitrary, the \texttt{rrls} iterate~\eqref{eq:sTik1} with $\bfB_k = \left( \lambda\bfL\t\bfL + \sum_{i=1}^{k}\bfA_{i}\t\bfA_{i}\right)^{-1}$ is the solution of the least squares problem
  \begin{equation}\label{eq:lsk}
    \min_{\bfx} \ \ \norm[2]{[\bfW_1,\ldots, \bfW_k ]\t(\bfA\bfx -\bfb)}^2+ \lambda\norm[2]{\bfL(\bfx-\bfy_{0})}^{2}.
  \end{equation}
    \item[(ii)] For $\lambda_k = \sum_{i = 1}^{k}\Lambda_i >0$ for any $k$ and $\bfx_0\in \bbR^n$ arbitrary, the \texttt{sTik} iterate~\eqref{eq:sTik2} with $\bfB_k = \left( \sum_{i=1}^{k}\Lambda_i\bfL\t\bfL + \sum_{i=1}^{k}\bfA_{i}\t\bfA_{i}\right)^{-1}$ is the solution of the least squares problem
    \begin{equation}\label{eq:lskl}
        \min_{\bfx} \ \ \norm[2]{[\bfW_1,\ldots, \bfW_k ]\t(\bfA\bfx -\bfb)}^2+ \lambda_k\norm[2]{\bfL\bfx}^{2}.
    \end{equation}
  \end{itemize}
\end{theorem}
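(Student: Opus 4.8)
The plan is to prove both parts by induction on $k$ after reducing each least squares problem to its normal equations. First I would note that, since $\bfL$ has full column rank, the matrix $\lambda\bfL\t\bfL+\sum_{i=1}^k\bfA_i\t\bfA_i$ is symmetric positive definite for every $\lambda>0$, so $\bfB_k$ in~\eqref{eq:sTik1} is well defined; the same holds for $\bfB_k$ in~\eqref{eq:sTik2} as soon as $\lambda_k=\sum_{i=1}^k\Lambda_i>0$. I would then record the elementary block identity $\norm[2]{[\bfW_1,\ldots,\bfW_k]\t(\bfA\bfx-\bfb)}^2=\sum_{i=1}^k\norm[2]{\bfA_i\bfx-\bfb_i}^2$, which is immediate from $\bfA_i=\bfW_i\t\bfA$, $\bfb_i=\bfW_i\t\bfb$ and the stacked structure of $[\bfW_1,\ldots,\bfW_k]\t$. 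With this identity, the objective of~\eqref{eq:lsk} is strictly convex, and its unique minimizer $\hat\bfy_k$ solves $\bigl(\sum_{i=1}^k\bfA_i\t\bfA_i+\lambda\bfL\t\bfL\bigr)\hat\bfy_k=\sum_{i=1}^k\bfA_i\t\bfb_i+\lambda\bfL\t\bfL\bfy_0$, i.e. $\hat\bfy_k=\bfB_k\bigl(\sum_{i=1}^k\bfA_i\t\bfb_i+\lambda\bfL\t\bfL\bfy_0\bigr)$; likewise the minimizer of~\eqref{eq:lskl} is $\hat\bfx_k=\bfB_k\sum_{i=1}^k\bfA_i\t\bfb_i$. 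It then suffices to show $\bfy_k=\hat\bfy_k$ and $\bfx_k=\hat\bfx_k$ for all $k$.

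For part (i) the engine of the induction is the telescoping identity $\bfB_k^{-1}=\bfB_{k-1}^{-1}+\bfA_k\t\bfA_k$. In the base case $k=1$, I would rewrite~\eqref{eq:sTik1} as $\bfy_1=\bfB_1\bigl(\bfB_1^{-1}\bfy_0-\bfA_1\t\bfA_1\bfy_0+\bfA_1\t\bfb_1\bigr)$ and use $\bfB_1^{-1}=\lambda\bfL\t\bfL+\bfA_1\t\bfA_1$ to get $\bfy_1=\bfB_1(\lambda\bfL\t\bfL\bfy_0+\bfA_1\t\bfb_1)=\hat\bfy_1$. For the step, assume $\bfy_{k-1}=\hat\bfy_{k-1}$, equivalently $\bfB_{k-1}^{-1}\bfy_{k-1}=\sum_{i=1}^{k-1}\bfA_i\t\bfb_i+\lambda\bfL\t\bfL\bfy_0$; factoring $\bfB_k$ out of~\eqref{eq:sTik1} gives $\bfy_k=\bfB_k\bigl(\bfB_k^{-1}\bfy_{k-1}-\bfA_k\t\bfA_k\bfy_{k-1}+\bfA_k\t\bfb_k\bigr)$, and the telescoping identity collapses $\bfB_k^{-1}\bfy_{k-1}-\bfA_k\t\bfA_k\bfy_{k-1}$ to $\bfB_{k-1}^{-1}\bfy_{k-1}$; inserting the induction hypothesis yields $\bfy_k=\bfB_k\bigl(\sum_{i=1}^k\bfA_i\t\bfb_i+\lambda\bfL\t\bfL\bfy_0\bigr)=\hat\bfy_k$.

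Part (ii) follows the same template with the telescoping identity $\bfB_k^{-1}=\bfB_{k-1}^{-1}+\bfA_k\t\bfA_k+\Lambda_k\bfL\t\bfL$. Factoring $\bfB_k$ out of~\eqref{eq:sTik2} gives $\bfx_k=\bfB_k\bigl(\bfB_k^{-1}\bfx_{k-1}-\bfA_k\t\bfA_k\bfx_{k-1}-\Lambda_k\bfL\t\bfL\bfx_{k-1}+\bfA_k\t\bfb_k\bigr)$, and the two curvature terms remove everything from $\bfB_k^{-1}\bfx_{k-1}$ except $\bfB_{k-1}^{-1}\bfx_{k-1}$; at $k=1$ this forces $\bfx_1=\bfB_1\bfA_1\t\bfb_1$, so the dependence on $\bfx_0$ drops out --- consistent with the absence of $\bfx_0$ in~\eqref{eq:lskl} --- and the induction closes exactly as in part (i).

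I do not expect a genuine obstacle: the argument is bookkeeping resting on the two telescoping identities for $\bfB_k^{-1}$. The only points needing care are checking that each $\bfB_k$ is invertible (covered by the full-column-rank hypothesis on $\bfL$ together with $\lambda>0$, resp. $\lambda_k>0$), verifying the block-residual identity that replaces the stacked norm by $\sum_i\norm[2]{\bfA_i\bfx-\bfb_i}^2$, and setting up the base case of each induction correctly --- in particular noticing that the \sTik recursion is already independent of the initial iterate $\bfx_0$.
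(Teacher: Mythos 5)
Your proposal is correct and follows essentially the same route as the paper's proof: both rest on the normal-equations characterization of the minimizers and the telescoping identity $\bfB_k^{-1}=\bfB_{k-1}^{-1}+\bfA_k\t\bfA_k+\Lambda_k\bfL\t\bfL$ (resp.\ $\bfB_k^{-1}=\bfB_{k-1}^{-1}+\bfA_k\t\bfA_k$), applied inductively after factoring $\bfB_k$ out of the update. You merely make explicit what the paper leaves implicit --- the induction hypothesis, the base case showing $\bfx_1=\bfB_1\bfA_1\t\bfb_1$ is independent of $\bfx_0$, and the invertibility of $\bfB_k$ --- which is sound but not a different argument.
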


The above results are true for any arbitrary sequence of matrices $\{\bfW_k\}$.  Next, we consider a fixed set of matrices, as described in the introduction, and allow random sampling from this set.  To be precise, define $\bfW_{\tau(k)}$ to be a random variable at the $k$-th iteration, where
 $\tau(k)$ is a random variable that indicates a sampling strategy.  For example, if we let $\tau(k)$ be a uniform random variable on the set $\{1, \dots, M\}$, then we would be sampling with replacement. For this random sampling strategy, we prove asymptotic convergence of \texttt{rrls} and \texttt{sTik} iterates in Section~\ref{subsec:withreplacement}.  Then, we focus on random cyclic sampling, where for each $j \in \mathbb{N}$, $\{\tau(k)\}_{jM+1}^{(j+1)M}$ is a random permutation on the set $\{1, \dots, M\}$. Note, cyclic sampling, where $\tau(k) = {k \mod M}$, is a special case of random cyclic sampling.
 We note that, until all blocks have been sampled, random cyclic sampling is nothing more than sampling without replacement.  For random cyclic sampling, we characterize iterates after each epoch and prove asymptotic convergence of \texttt{rrls} and \texttt{sTik} iterates in Section~\ref{subsec:withoutreplacement}.  An illustrative example comparing the behavior of the solutions is provided in Section~\ref{subsec:illustration}. For notational simplicity we denote $\bfA_{\tau(k)} = \bfW\t_{\tau(k)}\bfA$ and $\bfb_{\tau(k)} = \bfW\t_{\tau(k)}\bfb$.

Notice that for both random sampling and random cyclic sampling, we have the following property,
 \begin{equation}\label{eq:Wexpectation}
   \bbE  \, \bfW_{\tau(k)}\bfW_{\tau(k)}\t = \frac{1}{M}\bfI_{m} = \frac{\ell}{m}\bfI_m\,.
 \end{equation}
There are many choices for $\{\bfW_{i}\}$, see e.g., \cite{chung2017stochastic, le2017data, matouvsek2008variants}, but a simple choice is a block column partition of a permutation matrix. For this choice of $\{\bfW_{i}\}$, $\bfA_{\tau(k)}$ is just a predefined block of rows of $\bfA$.
This is the primary choice of  $\{\bfW_{i}\}$ we will consider.

\subsection{Random sampling}
\label{subsec:withreplacement}
Next we investigate the asymptotic convergence of \texttt{rrls} and \texttt{sTik} iterates for the case of random sampling.  This is also referred to as sampling with replacement.

\begin{theorem}\label{thm:converge_xls}
  Let $\bfA\in\bbR^{m\times n}$ and $\bfb\in\bbR^m$.  Let $\bfL\in\bbR^{s\times n}$ have full column rank and $\{ \bfW_{i}\}_{i=1}^{M}$ be a set of real valued $m \times \ell$ matrices with the property that $\sum_{i=1}^{M} \bfW_{i} \bfW_{i}\t = \bfI_m$, and let $\tau(k)$
  be a uniform random variable on the set $\{1, \dots M\}$.
   \begin{itemize}
     \item[(i)] Let $\lambda >0$, $\bfy_0\in \bbR^n$ be arbitrary, and define the sequence $\{\bfy_k\}$ as
    \begin{equation}
      \label{eq:rrls_tau}
      \bfy_k = \bfy_{k-1} - \bfB_k\bfA_{\tau(k)}\t(\bfA_{\tau(k)}\bfy_{k-1}-\bfb_{\tau(k)}),\quad k \in \bbN,
    \end{equation}
    where $\bfB_k = \left( \lambda\bfL\t\bfL + \sum_{i=1}^{k}\bfA_{\tau(i)}\t\bfA_{\tau(i)}\right)^{-1}$.  If $\bfA$ has full column rank, then $\bfy_k \arras\bfx(0).$
  \item [(ii)] Let $\sum_{i=1}^k \Lambda_i >0$ for all $k$, and $\lambda = \lim_{k\to\infty} \tfrac{M}{k} \sum_{i=1}^k \Lambda_i >0$ be finite. Let $\bfx_0\in \bbR^n$ be arbitrary, and define the sequence $\{\bfx_k\}$ as
  \begin{equation}
    \label{eq:sTik_tau}
    \bfx_k = \bfx_{k-1} - \bfB_k\left(\bfA_{\tau(k)}\t(\bfA_{\tau(k)}\bfx_{k-1}-\bfb_{\tau(k)})+\Lambda_k\bfL\t\bfL\bfx_{k-1}\right),
  \end{equation}
  where $\bfB_k = \left( \sum_{i=1}^{k} \Lambda_i\bfL\t\bfL+ \sum_{i=1}^{k} \bfA_{\tau(i)}\t\bfA_{\tau(i)}\right)^{-1}$.  Then
$\bfx_k \arras\bfx(\lambda)$.
   \end{itemize}
\end{theorem}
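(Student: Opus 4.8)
The plan is to combine the variational characterization of the iterates in Theorem~\ref{thm:sqnrls} with a strong law of large numbers (SLLN) for the random sampling matrices, and then pass the limit through a matrix inversion. Write $\bfS_k := \sum_{i=1}^{k} \bfW_{\tau(i)} \bfW_{\tau(i)}\t$. Since $\bfA_{\tau(i)} = \bfW_{\tau(i)}\t \bfA$ and $\bfb_{\tau(i)} = \bfW_{\tau(i)}\t \bfb$, we have $\sum_{i=1}^{k} \bfA_{\tau(i)}\t \bfA_{\tau(i)} = \bfA\t \bfS_k \bfA$ and $\sum_{i=1}^{k} \bfA_{\tau(i)}\t \bfb_{\tau(i)} = \bfA\t \bfS_k \bfb$. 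Applying Theorem~\ref{thm:sqnrls} with $\bfW_{\tau(1)},\dots,\bfW_{\tau(k)}$ in the role of $\bfW_1,\dots,\bfW_k$, the iterates have the closed forms
\[
  \bfy_k = \bigl(\bfA\t \bfS_k \bfA + \lambda \bfL\t \bfL\bigr)^{-1}\bigl(\bfA\t \bfS_k \bfb + \lambda \bfL\t \bfL \bfy_0\bigr), \qquad \bfx_k = \bigl(\bfA\t \bfS_k \bfA + \lambda_k \bfL\t \bfL\bigr)^{-1} \bfA\t \bfS_k \bfb,
\]
with $\lambda_k = \sum_{i=1}^{k} \Lambda_i$; the indicated inverses exist because $\bfS_k$ is positive semidefinite, $\lambda, \lambda_k > 0$, and $\bfL$ has full column rank, so the bracketed matrices are positive definite.

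Next I would invoke the SLLN. Under random (with-replacement) sampling, $\bfW_{\tau(1)}\bfW_{\tau(1)}\t, \bfW_{\tau(2)}\bfW_{\tau(2)}\t,\dots$ are i.i.d.\ matrix-valued random variables, each taking only finitely many values and hence integrable, with common mean $\tfrac1M \bfI_m$ by~\eqref{eq:Wexpectation}. Applying the SLLN entrywise gives $\tfrac1k \bfS_k \arras \tfrac1M \bfI_m$, and therefore $\tfrac1k \bfA\t \bfS_k \bfA \arras \tfrac1M \bfA\t \bfA$ and $\tfrac1k \bfA\t \bfS_k \bfb \arras \tfrac1M \bfA\t \bfb$. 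I would then divide numerator and denominator of each closed form above by $k$ and pass to the limit on the probability-one event on which these convergences hold. For \texttt{rrls}, the terms $\tfrac\lambda k \bfL\t \bfL$ and $\tfrac\lambda k \bfL\t \bfL \bfy_0$ both tend to zero, so $\bfy_k \arras \bigl(\tfrac1M \bfA\t \bfA\bigr)^{-1}\bigl(\tfrac1M \bfA\t \bfb\bigr) = (\bfA\t \bfA)^{-1}\bfA\t \bfb = \bfx(0)$, where invertibility of the limit uses that $\bfA$ has full column rank (this is the only place that hypothesis enters). For \texttt{sTik}, the assumption $\lambda = \lim_{k\to\infty}\tfrac Mk \sum_{i=1}^{k}\Lambda_i$ means $\tfrac{\lambda_k}{k}\to \tfrac\lambda M$, so $\bfx_k \arras \bigl(\tfrac1M \bfA\t \bfA + \tfrac\lambda M \bfL\t \bfL\bigr)^{-1}\bigl(\tfrac1M \bfA\t \bfb\bigr) = (\bfA\t \bfA + \lambda \bfL\t \bfL)^{-1}\bfA\t \bfb = \bfx(\lambda)$; here full column rank of $\bfL$ together with $\lambda > 0$ already makes the limiting matrix positive definite, so no rank condition on $\bfA$ is needed.

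The one step requiring care is passing the inverse through the almost-sure limit: this is valid because matrix inversion is continuous on the open set of invertible matrices and the relevant limits are invertible, so on the probability-one event the denominators are eventually invertible and, since the numerators converge as well, the products $\bfy_k$ and $\bfx_k$ converge to the stated limits. I do not expect a genuine obstacle here. The only real bookkeeping subtlety is the $1/k$ normalization, which makes the Tikhonov term vanish in the \texttt{rrls} limit while surviving with weight $\lambda/M$ (and, after cancellation of the $1/M$ factors, producing $\bfx(\lambda)$) in the \texttt{sTik} limit.
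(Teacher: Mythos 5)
Your proposal is correct and follows essentially the same route as the paper: invoke Theorem~\ref{thm:sqnrls} for the closed-form expressions of $\bfy_k$ and $\bfx_k$, normalize by $k$, apply the strong law of large numbers to $\tfrac1k\sum_{i=1}^k \bfW_{\tau(i)}\bfW_{\tau(i)}\t$ using~\eqref{eq:Wexpectation}, and pass the almost-sure limit through the (continuous, eventually well-defined) matrix inverse — the paper cites Slutsky's theorem for a.s.\ convergence where you cite the continuous mapping argument directly. Your handling of the vanishing $\tfrac{\lambda}{k}\bfL\t\bfL\bfy_0$ term and of $\tfrac{\lambda_k}{k}\to\tfrac{\lambda}{M}$ matches the paper's treatment.
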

The significance of Theorem~\ref{thm:converge_xls} is that the \texttt{rrls} iterates converge asymptotically to the \emph{unregularized} least-squares solution, $(\bfA\t \bfA)^{-1}\bfA\t \bfb$, which is undesirable for ill-posed inverse problems.
On the other hand, the \texttt{sTik} iterates converge asymptotically to a Tikhonov-regularized solution. Note that for a given $\lambda$, convergence to $\bfx\left(\lambda\right)$ is ensured by  setting  $\Lambda_k=\frac{\lambda}{M}$. A more realistic scenario would be to adapt $\Lambda_{k}$ as data become available, since the desired regularization parameter is typically not known before the data is received. Parameter selection strategies for selecting $\Lambda_k$ are addressed in Section~\ref{sec:regparselection}, and empirically we observe favorable asymptotic properties (see Example 2).

\subsection{Random Cyclic Sampling}
\label{subsec:withoutreplacement}
Next we investigate \texttt{rrls} and \texttt{sTik} with random cyclic sampling.
In addition to proving asymptotic convergence in this case, we can also describe the iterates as Tikhonov solutions after each epoch, where an epoch is defined as a sweep through all the data.

\begin{theorem} \label{thm:scwr}
  Let $\bfA\in\bbR^{m\times n}$ and $\bfb\in\bbR^m$.  Let $\bfL\in\bbR^{s\times n}$ have full column rank and $\{ \bfW_{i}\}_{i=1}^{M}$ be a set of real valued $m \times \ell$ matrices with the property that $\sum_{i=1}^{M} \bfW_{i} \bfW_{i}\t = \bfI_m$, and let $\tau(k)$ be a random variable such that for $j \in \mathbb{N}$, $\{\tau(k)\}_{jM+1}^{(j+1)M}$ is a random permutation on the set $\{1, \dots, M\}.$
\begin{enumerate}
  \item If $\lambda >0$, $\bfy_0 = \bfzero$, and the sequence $\{\bfy_k\}$ is defined as~\eqref{eq:rrls_tau} with $\bfB_k = \left( \lambda\bfL\t\bfL + \sum_{i=1}^{k}\bfA_{\tau(i)}\t\bfA_{\tau(i)}\right)^{-1}$, then the \textnormal{\texttt{rrls}} iterate at the $j$-th epoch is given as \ $\bfy_{jM} =  \bfx\left(\tfrac{1}{j}\lambda\right)$.
   \item Let $\{\Lambda_k\}$ be an infinite sequence with the property that
 $\lambda_k = \sum_{i=1}^k \Lambda_i >0$. If $\bfx_0$ is arbitrary and the sequence $\{\bfx_k\}$ is defined as~\eqref{eq:sTik_tau} with $\bfB_k = \left( \sum_{i=1}^{k} \Lambda_{i}\bfL\t\bfL + \sum_{i=1}^{k}\bfA_{\tau(i)}\t\bfA_{\tau(i)}\right)^{-1}$, then the \textnormal{\texttt{sTik}} iterate at the $j$-th epoch is given as $\bfx_{jM} =  \bfx\left(\tfrac{1}{j}\lambda_{jM}\right)$.
 \end{enumerate}
\end{theorem}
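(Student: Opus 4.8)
The plan is to leverage Theorem~\ref{thm:sqnrls}, which already characterizes both iterates (for an arbitrary sequence $\{\bfW_i\}$) as solutions to regularized least squares problems, and then exploit the special structure that random cyclic sampling forces after a complete epoch. The key observation is that after $j$ full epochs, the collection $\{\bfW_{\tau(1)},\ldots,\bfW_{\tau(jM)}\}$ consists of exactly $j$ copies of each of $\bfW_1,\ldots,\bfW_M$ (in some order), because each block of $M$ consecutive indices is a permutation of $\{1,\ldots,M\}$. Consequently $\sum_{i=1}^{jM}\bfA_{\tau(i)}\t\bfA_{\tau(i)} = j\sum_{i=1}^{M}\bfA_i\t\bfA_i = j\,\bfA\t\bfA$, where the last equality uses assumption~2 on the $\bfW_i$ (that $\sum_{i=1}^M \bfW_i\bfW_i\t = \bfI_m$), since $\sum_{i=1}^M \bfA_i\t\bfA_i = \bfA\t\left(\sum_{i=1}^M \bfW_i\bfW_i\t\right)\bfA = \bfA\t\bfA$. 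Similarly, the data term satisfies $\sum_{i=1}^{jM}\bfA_{\tau(i)}\t\bfb_{\tau(i)} = j\,\bfA\t\bfb$, and $[\bfW_{\tau(1)},\ldots,\bfW_{\tau(jM)}]\t(\bfA\bfx-\bfb)$ has squared norm equal to $j\,\norm[2]{\bfA\bfx-\bfb}^2$.

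For part (i), I would apply Theorem~\ref{thm:sqnrls}(i) at iteration $k=jM$ with $\bfy_0 = \bfzero$: the iterate $\bfy_{jM}$ solves $\min_{\bfx}\,\norm[2]{[\bfW_{\tau(1)},\ldots,\bfW_{\tau(jM)}]\t(\bfA\bfx-\bfb)}^2 + \lambda\norm[2]{\bfL\bfx}^2$. By the epoch-counting identity above this objective equals $j\,\norm[2]{\bfA\bfx-\bfb}^2 + \lambda\norm[2]{\bfL\bfx}^2$, which (dividing by $j$) has the same minimizer as $\norm[2]{\bfA\bfx-\bfb}^2 + \tfrac{\lambda}{j}\norm[2]{\bfL\bfx}^2$; the minimizer is by definition $\bfx\!\left(\tfrac{1}{j}\lambda\right)$. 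Note that full column rank of $\bfL$ (plus $\lambda>0$) guarantees the normal-equations matrix $\bfA\t\bfA + \tfrac{\lambda}{j}\bfL\t\bfL$ is invertible, so the minimizer is unique and the statement is well-posed. Part (ii) is entirely analogous: apply Theorem~\ref{thm:sqnrls}(ii) at $k=jM$ with $\lambda_{jM} = \sum_{i=1}^{jM}\Lambda_i$, obtaining that $\bfx_{jM}$ minimizes $j\,\norm[2]{\bfA\bfx-\bfb}^2 + \lambda_{jM}\norm[2]{\bfL\bfx}^2$, hence equals $\bfx\!\left(\tfrac{1}{j}\lambda_{jM}\right)$. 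Here I should double-check that Theorem~\ref{thm:sqnrls}(ii) requires only $\lambda_k = \sum_{i=1}^k\Lambda_i > 0$ for each $k$ (which is the stated hypothesis) and places no restriction on the individual $\Lambda_i$, so it applies verbatim with the random cyclic index sequence.

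The main obstacle — really the only nontrivial point — is making the epoch-counting argument rigorous: one must verify that a concatenation of $j$ random permutations of $\{1,\ldots,M\}$ contributes each matrix $\bfW_i$ exactly $j$ times, so that sums like $\sum_{i=1}^{jM}\bfA_{\tau(i)}\t\bfA_{\tau(i)}$ reorganize into $j\sum_{i=1}^M\bfA_i\t\bfA_i$ \emph{deterministically} (independent of which permutations were drawn). This is why the conclusion holds pathwise rather than merely in expectation or almost surely. Everything else is a substitution into Theorem~\ref{thm:sqnrls} and the elementary fact that scaling a strictly convex quadratic by a positive constant does not change its minimizer. I would also remark that part (i) with $\bfy_0=\bfzero$ is a clean special case, whereas for general $\bfy_0$ one would instead land on a shifted Tikhonov problem centered at $\bfy_0$; the hypothesis $\bfy_0=\bfzero$ is what makes the epoch iterate coincide exactly with the standard Tikhonov solution $\bfx(\cdot)$.
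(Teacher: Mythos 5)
Your proposal is correct and follows essentially the same route as the paper: both invoke Theorem~\ref{thm:sqnrls} and the deterministic epoch-counting identity $\sum_{i=1}^{jM}\bfA_{\tau(i)}\t\bfA_{\tau(i)} = j\,\bfA\t\bfA$ (and likewise for the data term), the only cosmetic difference being that the paper manipulates the closed-form normal-equations solution while you rescale the objective. Your added remarks on well-posedness and the role of $\bfy_0=\bfzero$ are accurate but not needed beyond what the paper records.
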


Notice that at every epoch, the effective regularization parameter for \rrls, i.e., $\tfrac{\lambda}{j}$, is reduced.  Also, if $\bfA$ has full column rank, we have $\lim_{j\to \infty}\bfy_{jM} =\bfx(0).$
On the other hand, the \sTik iterates do not converge to the unregularized solution but do converge to a Tikhonov-regularized solution, since at each epoch $j = k/M$ and we have $\bfx_{jM} = \bfx_{k} =  \bfx\left(\tfrac{M}{k}\lambda_{k}\right)$ and $\tfrac{M}{k}\lambda_{k}>0$. In Section~\ref{subsec:illustration}, we illustrate the convergence behavior of the \rrls and \sTik iterates, but first we make some connections to existing optimization methods.

\subsection{Connections to stochastic approximation methods}
\label{sec:SO}
There is a connection between the iterative methods with sampling presented in Section \ref{sec:rrls} and stochastic approximation methods. First we recast the Tikhonov problem~\eqref{eq:ip} as a stochastic optimization problem.  For simplicity, consider random sampling (i.e., with replacement), where $\tau(k)$ is a uniform random variable on the set $\{1, \dots , M\}$.
Then if we define $ f_{\tau(k)}(\bfx) = \norm[2]{\bfW_{\tau(k)}\t\left(\bfA\bfx - \bfb\right)}^2 + \frac{\lambda}{M}\norm[2]{\bfL\bfx}^{2}$, it is easy to show that

\begin{equation*}
 \bbE \, f_{\tau(k)} \propto f\,,
  \end{equation*}
and therefore
\begin{equation} \label{eq:so}
  \argmin_{\bfx} \ \bbE \, f_{\tau(k)}(\bfx)  = \argmin_{\bfx} \  f(\bfx).
\end{equation}
There are a number of stochastic optimization methods that can be used to compute solutions to the expectation minimization problem on the  left. Stochastic approximation methods represent one class of methods \cite{shapiro2009lectures}. For the Tikhonov problem, a stochastic approximation method has the form,
\begin{equation}
  \label{eq:sa}\bfx_{k+1}  = \bfx_{k} + \bfB_{k}\nabla f_{\tau(k)}\left(\bfx_{k}\right)\,,
\end{equation}
where $\nabla f_{\tau(k)}\left(\bfx_{k}\right) = \bfA\t_{\tau(k)}\left(\bfA_{\tau(k)}\bfx_{k}-\bfb_{\tau(k)}\right) + \frac{\lambda}{M}\bfL\bfx_{k}$ is the sample gradient for the Tikhonov problem.
Different choices of $\bfB_k$ can be used in~\eqref{eq:sa}.  If $\bfB_{k} =  \left(\frac{k\lambda}{M}\bfL\t\bfL + \sum_{i=1}^{k}\bfA_{\tau(i)}\t\bfA_{\tau(i)}\right)^{-1}$, then all of the previously computed global curvature information is encoded in $\bfB_k$ and we recover the \texttt{sTik} method with $\Lambda_{i} = \frac{\lambda}{M}$.  Theorem \ref{thm:converge_xls} (ii) shows that these iterates will converge asymptotically to the minimizer of \eqref{eq:so}, but storage can get costly.
Another option is to take $\bfB_{k} = \bfI_n$, which corresponds to the stochastic gradient method \cite{Bottou1998}.
For faster convergence closer to the minimizer,
there are various methods in the stochastic optimization literature that can be used to approximate the global curvature information $\nabla f \left(\bfx_{k}\right)$ \cite{bottou2004large, Kushner1997}.  For example, a stochastic LBFGS method stores a small set of vectors, rather than matrix $\bfB_{k}$, and can perform multiplications in an efficient manner \cite{mokhtari2015global, Byrd2016}.

We are most interested in the Tikhonov problem~\eqref{eq:ip}, but we note that there exists methods for the case where $\lambda=0$  that have connections to stochastic optimization methods.  Using the same reformulation as above, a stochastic approximation method would have the form~\eqref{eq:sa}.
If we take $\bfB_{k} = \left(\nabla^{2} f_{\tau(k)}\right)^{\dagger}$, then we get the randomized block Kaczmarz method \cite{Needell2014, strohmer2009randomized,andersen2014generalized}.  Notice that the curvature information comes only from the current sample.
On the other hand, if $\bfB_{k}$ is chosen to contain all previous curvature information, we get the \texttt{rrls} iterates
 \begin{equation*}
  \bfy_k = \bfy_{k-1} - \left( \lambda\bfL\t\bfL + \sum_{i=1}^{k}\bfA_{\tau(i)}\t\bfA_{\tau(i)}\right)^{-1} \bfA_{\tau(k)}\t(\bfA_{\tau(k)}\bfy_{k-1}-\bfb_{\tau(k)}).
\end{equation*}

Note that $\lambda\bfL\t\bfL$ is included to ensure invertibility and is often replaced with $\bfI_n$.  Regardless, the iterates converge to the unregularized problem, c.f., Theorem \ref{thm:converge_xls} (i).
The connection between recursive least squares and stochastic approximation methods was noted in \cite{Kushner1997}, and the approximation can be interpreted as a regularized stochastic approximation method that was considered, e.g., in \cite{chung2017stochastic,bottou2018optimization}.

\subsection{An Illustration} \label{subsec:illustration}
In the following illustration, we use a small toy example to highlight the convergence behaviors of \rrls and \sTik iterates.  We investigate both random sampling and random cyclic sampling, and we demonstrate convergence by plotting solutions after multiple epochs of the data.
The example we use is a Tikhonov problem of the form~\eqref{eq:ip}, where
\begin{equation*}
  \bfA = \begin{bmatrix}\bfone  &  \bfdelta_\bfA \\ 0 & 1 \end{bmatrix} \in \bbR^{10\times 2}, \qquad \bfb = \bfA\bfx_{\rm true}+\bfdelta_\bfb, \qquad \mbox{and }\quad \bfx_{\rm true} = \bfone.
\end{equation*}
The vectors $\bfdelta_\bfA$ and $\bfdelta_\bfb$ are realizations from the normal distributions $\calN(\bfzero, 0.005\,\bfI_{9})$ and $\calN(\bfzero, 0.1\,\bfI_{10})$ respectively, and $\bfone$ is the vector of ones of appropriate length. We further choose $\bfL = \bfI_2$ and fix $\lambda = 0.2$ for the \rrls iterates $\bfy_k$. For \sTik iterates $\bfx_k$, we choose the parameters $\Lambda_k$ such that the regularization is constant at each epoch, i.e.,  $\frac{10}{k} \sum_{i= 1}^k \Lambda_i = 0.2$. With this setup we have $\bfx(0) = [1.0869, -1.3799]\t$ and $\bfx(\lambda) = [1.0698, -0.0271]\t$.
We let $\bfW_{\tau(i)}$ be the $\tau(i)$-th column of the identity matrix, and set $\bfx_0 = \bfy_0 = \bfzero$.

\begin{figure}[b]
  \begin{center}
   \includegraphics[width=\textwidth]{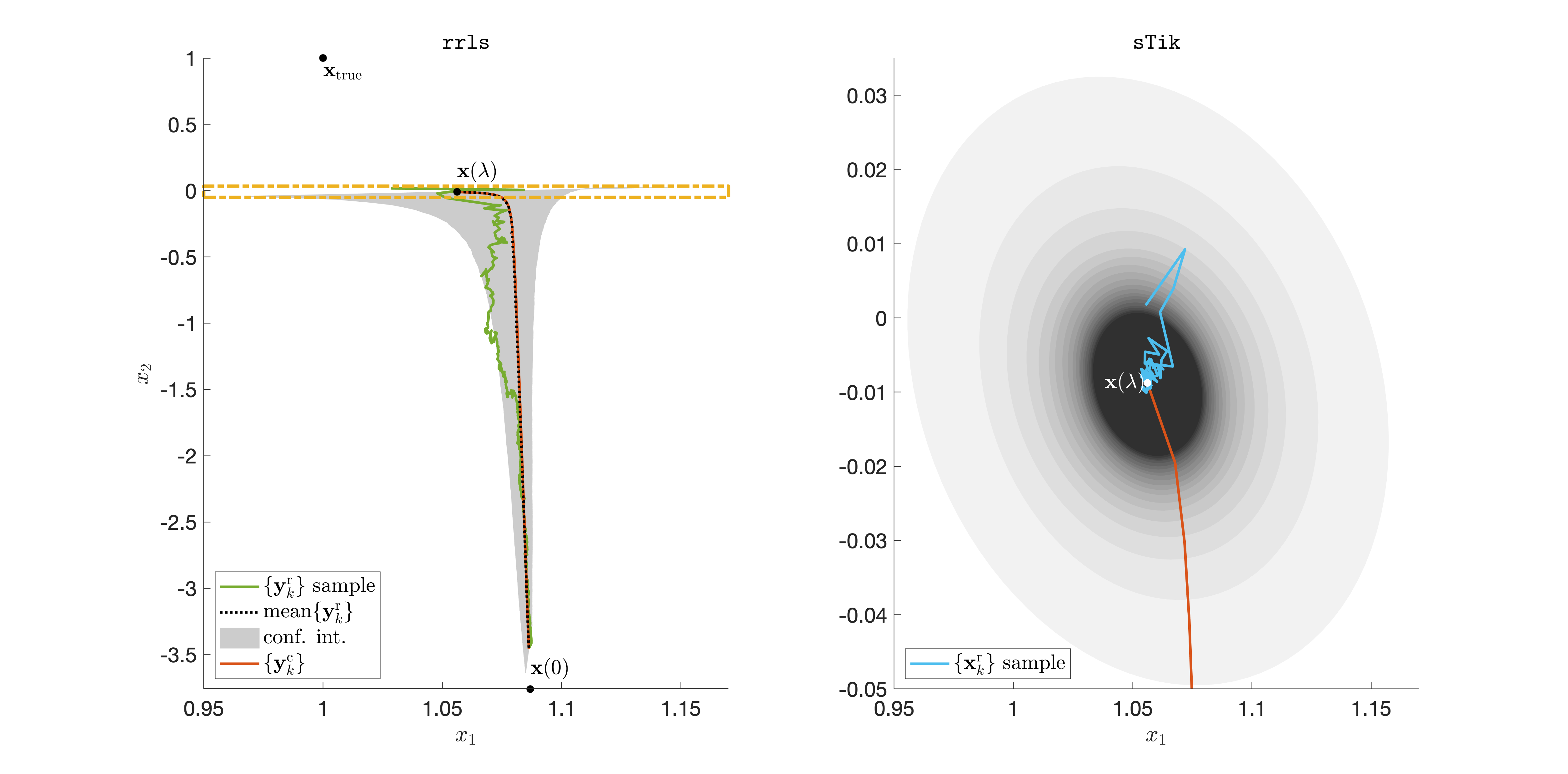}
  \end{center}
\caption{Illustration of convergence behaviors of \rrls and \sTik iterates.  Shown in the left panel are the true solution $\bfx_{\rm true}$, the unregularized solution $\bfx(0)$, the Tikhonov solution $\bfx(\lambda)$, and \rrls iterates after multiple epochs.
Both \rrls with random sampling iterates $\{\bfy_k^{\rm r}\}$ and \rrls with random cyclic sampling iterates $\{\bfy_k^{\rm c}\}$ converge asymptotically to the unregularized solution.
In the right panel, we provide \sTik with random sampling iterates $\{\bfx_k^{\rm r}\}$ and confidence bounds.  These iterates stay close to the Tikhonov solution.  The axis for the right figure corresponds to the rectangular box in the left figure. The concentric gray circles represent the $90\%$ confidence interval for these iterates after subsequent epochs.  }\label{fig:toyexample}
\end{figure}

In Figure~\ref{fig:toyexample}, we provide two illustrations.  In the left panel, we provide the true solution $\bfx_{\rm true}$, the unregularized solution $\bfx(0)$, the Tikhonov solution $\bfx(\lambda)$, and the \rrls iterates after each epoch.  The \rrls iterates with random sampling with replacement are denoted by $\bfy_k^{\rm r}$, and the \rrls iterates with random cyclic sampling are denoted by $\bfy_k^{\rm c}$. Notice that by Theorem~\ref{thm:scwr}, $\bfy_k^{\rm c}$ at each epoch is a Tikhonov solution, i.e., after the $j$-th epoch $\bfy_{jM}^{\rm c} = \bfx\left(\tfrac{1}{j}\lambda\right)$.  Thus, we get a set of Tikhonov solutions with vanishing regularization parameters, and these iterates asymptotically converge to the unregularized solution.
For \rrls with random sampling, we run $1,\!000$ simulations and provide one sample path, along with the mean (dotted line) and region of the $95$-th percentile shaded in grey. We note that the mean of $\{\bfy_k^{\rm r}\}$ is almost identical to the random cyclic sequence $\{\bfy_{k}^{\rm c}\}$ (orange line) suggesting that the random sequence $\{\bfy_{k}^{\rm r}\}$ is an unbiased estimator of the deterministic sequence $\{\bfy_{k}^{\rm c}\}$ (at each epoch). In the right panel of Figure~\ref{fig:toyexample}, we provide the \sTik iterates with random sampling, which are denoted by $\bfx_k^{\rm r}$.  Again, we run $1,\!000$ simulations and provide one simulation along with the shaded percentiles.  It is evident that with more epochs, the iterates get closer to the desired Tikhonov solution. To aid with visual scaling, the axis for the right figure corresponds to the dotted rectangular box in the left figure.  The \sTik iterates with random cyclic sampling are omitted since $\bfx_{jM}^{\rm c} = \bfx(\lambda)$ (i.e., we get the Tikhonov solution after each epoch).

 We observe that for random sampling, both \rrls and \sTik iterates contain undesirable uncertainties in the estimates. Although \rrls iterates provide approximations to the Tikhonov solution, the main disadvantages are that the regularization parameter cannot be updated during the process and the iterates converge asymptotically to the unregularized solution. Hence we disregard the \rrls method and focus on \sTik with \emph{random cyclic sampling}, where  $\lambda$ can be updated through $\Lambda_k$.

 \section{Sampled regularization parameter selection methods}
 \label{sec:regparselection}
 The ability to update the regularization parameter while still exhibiting favorable  convergence properties makes the \sTik method appealing for massive inverse problems. However, sampled regularization parameter selection methods must be developed to enable proper choices of updates $\Lambda_k$. Unfortunately, standard regularization parameter selection methods are not feasible in this setting because many of them require access to the full residual vector, $\bfr(\lambda) = \bfA\bfx(\lambda) -\bfb$, which is not available.  In this section, we investigate variants of existing regularization parameter selection methods \cite{aster2018parameter, tenorio2017introduction,Bardsley2018} that are based on the sample residual.

 In the following we assume that at the $k$-th iteration $\Lambda_i$, $i= 1,\ldots,k-1$ have been determined.
 Then the goal is to determine an appropriate update parameter $\Lambda_k$. Notice that from Theorems~\ref{thm:sqnrls} and~\ref{thm:scwr}, the $k$-th \sTik iterate can be represented as
 \begin{gather}
   \label{eqn:regularized}
   \bfx_k(\lambda) = \bfC_k(\lambda) \bfb, \quad \mbox{where}\\
   \bfC_k(\lambda) = \left(\left(\lambda + \sum_{i = 1}^{k-1}\Lambda_i \right)\bfL\t\bfL + \sum_{i=1}^k \bfA\t \bfW_{\tau(i)} \bfW_{\tau(i)}\t \bfA\right)^{-1} \sum_{i=1}^k \bfA\t \bfW_{\tau(i)} \bfW_{\tau(i)}\t\,.  \nonumber
 \end{gather}

 \paragraph{\bf Sampled discrepancy principle.} The basic idea of the \emph{sampled discrepancy principle (sDP)} is that at the $k$-th iteration, the goal is to select parameter $\Lambda_k$ so that the sum of squared residuals for the current sample $\norm[2]{\bfW_{\tau(k)}\t(\bfA\bfx_k - \bfb)}^2$ is equal to $\mathbb{E}\norm[2]{\bfW_{\tau(k)}\t \bfepsilon}^2$. Using properties of conditional expectation, we find
 \begin{align*} \mathbb{E} \norm[2]{\bfW_{\tau(k)}\t\left(\bfA\bfx_{\text{true}}-\bfb\right)}^{2}  = &\, \mathbb{E}\norm[2]{\bfW_{\tau(k)}\t\bfepsilon}^{2}  \\  = & \, \mathbb{E} \, \mathbb{E}\left[\bfepsilon\t\bfW_{\tau(k)}\bfW_{\tau(k)}\t\bfepsilon \, \rvert \, \bfepsilon \right]  \\
   = & \, \sigma^2 \trace{\mathbb{E} \, \bfW_{\tau(k)}\bfW_{\tau(k)}\t } \\
   = & \, \sigma^{2}\ell,\end{align*}
 where $\trace{\cdot}$ corresponds to the matrix trace function.
 Thus, at the $k$-th iteration and for a given realization, we select $\lambda$ such that
 \begin{align*}
 \norm[2]{\bfW_{\tau(k)}\t\left(\bfA\bfx_k(\lambda)-\bfb\right)}^{2} \approx \gamma \sigma^{2} \ell\,,
 \end{align*}
 where $\gamma>1$ is some predetermined real number, see \cite{hansen2006deblurring,tenorio2017introduction} for details.

 \paragraph{\bf Sampled unbiased predictive risk estimator.} Next, we describe a method to select $\Lambda_k$ based on a \emph{sampled unbiased predictive risk estimator (sUPRE)}.
 The basic idea is to find $\Lambda_k$ to minimize the sampled predictive risk,
 \begin{equation*}
   \bbE  \norm[2]{\bfW_{\tau(k)}\t(\bfA \bfx_k(\lambda) -\bfA\bfx_{\true})}^2 \,,
 \end{equation*}
 which is equivalent to
 \begin{equation*}
   \bbE \norm[2]{\bfW_{\tau(k)}\t\left(\bfA \bfx_k(\lambda) -\bfb\right)}^2 + 2 \sigma^2\, \bbE\, \trace{\bfW_{\tau(k)} \bfW_{\tau(k)}\t \bfA\bfC_k(\lambda)} - \sigma^2 \ell\,.
 \end{equation*}
 See~\ref{sec:SUPRE} for details of the derivation.  Then, similar to the approach used in the standard UPRE derivation, the parameter $\Lambda_k$ is selected by finding a minimizer of the unbiased estimator for the sampled predictive risk,
 \begin{equation}
   \label{eqn:sampledUPRE}
   U_k(\lambda) = \norm[2]{\bfW_{\tau(k)}\t\left(\bfA \bfx_k(\lambda) -\bfb\right)}^2 + 2 \sigma^2 \trace{\bfW_{\tau(k)}\t \bfA\bfC_k(\lambda)\bfW_{\tau(k)}} - \sigma^2\ell \,,
 \end{equation}
 for a given realization. Similar to the standard methods, both sDP and sUPRE require estimates of $\sigma^2$.  For Gaussian noise, there are various ways that one can obtain such an estimate, see e.g.,~\cite{donoho1995noising, tenorio2017introduction}.

 \paragraph{\bf Sampled generalized cross validation.} Lastly, we describe the \emph{sampled generalized cross validation (sGCV)} method for selecting $\Lambda_k$ and point the interested reader to~\ref{sec:SGCV} for details of the derivation.  The basic idea is to use a ``leave-one-out'' cross validation approach to find a value of $\Lambda_k$, but the main differences compared to the standard GCV method are that at the $k$-th iteration, we only have access to the sample residual and the iterates only correspond to Tikhonov solutions with only partial data.  The parameter $\lambda_k$ is selected by finding a minimizer of the sGCV function,
 \begin{equation}
   \label{eq:sampledGCV}
   G_k(\lambda)=\frac{\ell \norm[2]{\bfW_{\tau(k)}\t(\bfA\bfx_k(\lambda) - \bfb)}^2}{\trace{\bfI_\ell-\bfW_{\tau(k)}\t\bfA\bfC_k(\lambda)\bfW_{\tau(k)}}^2}=\frac{\ell \norm[2]{\bfW_{\tau(k)}\t(\bfA\bfx_k(\lambda) - \bfb)}^2}{\left(\ell-\trace{\bfW_{\tau(k)}\t\bfA\bfC_k(\lambda)\bfW_{\tau(k)}}\right)^2}\,.
 \end{equation}

 Adapting regularization parameters during an iterative processes is not a new concept; however, much of the previous work in this area utilize projected systems, see e.g., \cite{renaut2017hybrid,kilmer2001choosing}, or are specialized to applications such as denoising \cite{hashemi2015adaptive}.  Another common approach is to consider the unregularized problem and to terminate the iterative process before noise contaminates the solution.  This phenomenon is called semiconvergence, and selecting a good stopping iteration can be very difficult.  There have been investigations into semiconvergence behavior of iterative methods such as Kaczmarz, e.g., \cite{elfving2014semi}.

 \paragraph{Example 2.} In this example, we investigate the behavior of the previously discussed sampled regularization parameter update strategies, i.e., sDP, sUPRE, and sGCV, for  multiple ill-posed inverse problems from the Matlab matrix gallery and from P.~C.~Hansens' Regularization Tools toolbox \cite{regtools,matlabgallery}. For simplicity, we set $m = n = 100$ and use the true solutions $\bfx_{\rm true}$ that are provided by the toolbox.  If none is provided, we set $\bfx_{\rm true} = \bfone$. We let $\bfL = \bfI_{100}$, and set $\bfepsilon \sim \calN(\bfzero, 0.01\,\bfI_{100})$.
 Sampling matrices $\bfW_j \in \bbR^{100 \times 10}$ are given as $\bfW_j = [\bfzero_{ 10(j-1)\times 10}; \bfI_{10}; \bfzero_{10(10-j)\times 10}]$ for $j = 1,\ldots, 10$, such that $\bfA$ and $\bfb$ are sampled in 10 consecutive blocks.  Here, we sample $\bfW$ in a random cyclic fashion and let $\sigma^2$ be the true noise variance for sDP and sUPRE.  For sDP, we set $\gamma = 4$, in accordance with \cite{hansen2006deblurring,tenorio2017introduction}.

 We first consider the {\tt prolate} example, where $\bfA$ is an ill-conditioned Toeplitz matrix that comes from Matlab's matrix gallery.
 In Figure~\ref{fig:example2a} we illustrate the asymptotic behavior of these sampled parameter selection strategies by plotting the number of epochs versus the value of $\lambda$ for sDP, sUPRE, and sGCV.
 For comparison, we provide the regularization parameter for the full problem corresponding to DP, UPRE, and GCV.  DP and UPRE use the true noise variance, and $\gamma$ is as above for DP.  For comparison, we also provide the optimal parameter $\lambda_{\rm opt}$ for the full problem, which is the parameter that minimizes the 2-norm of the error between the reconstruction and the true solution.  This last approach is not possible in practice. Empirically, we observe that with more iterations, the sampled regularization parameter selection methods tend to ``stabilize''. The sDP regularization parameter stabilizes near the DP parameter for the full problem, but both sUPRE and sGCV stabilize closer to the optimal regularization parameter.

 \begin{figure}[b]
   \begin{center}
    \includegraphics[width=0.9\textwidth]{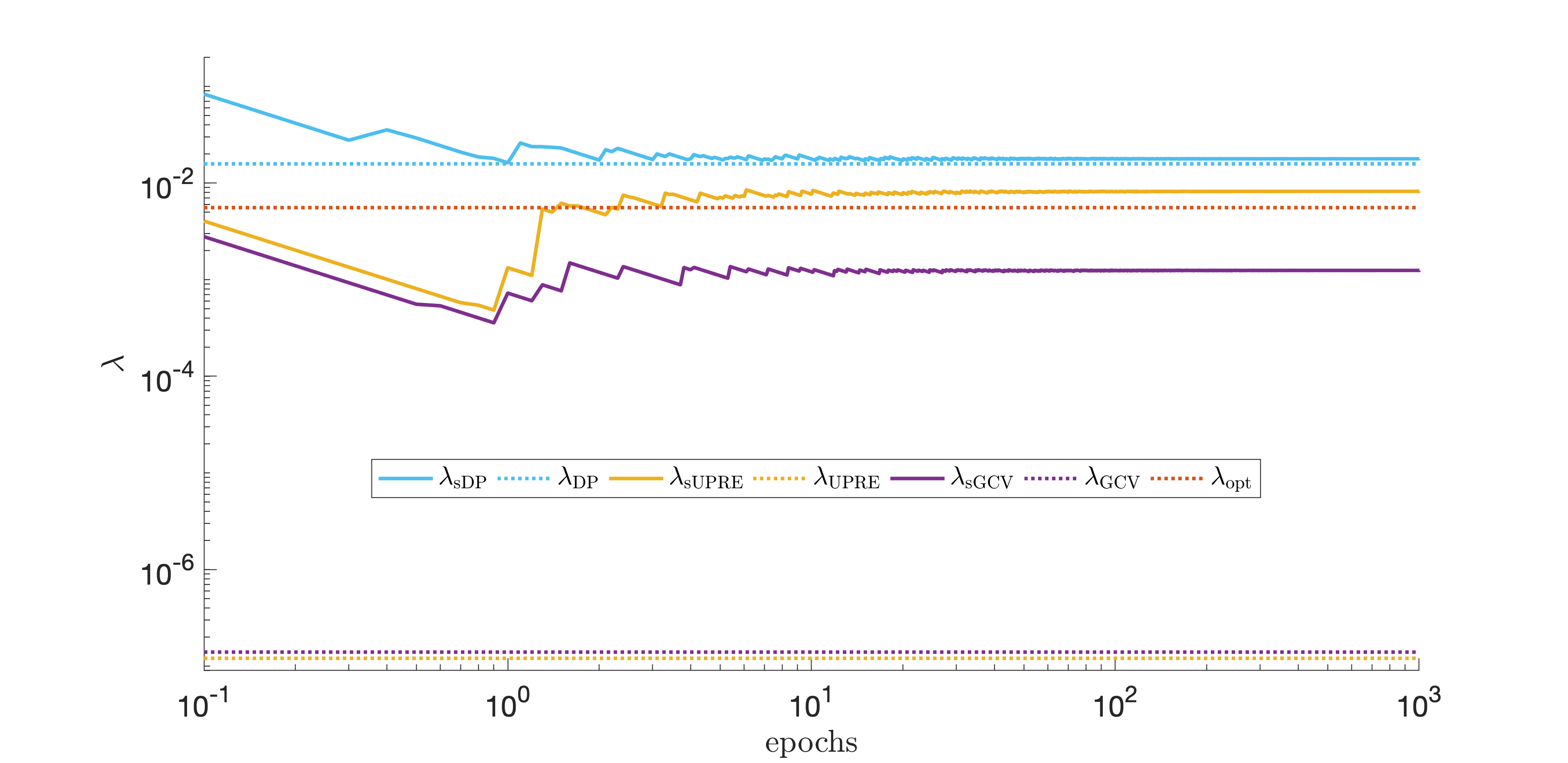}
   \end{center}
   \caption{``Asymptotic'' behavior of the sampled regularization parameter selection methods for the {\tt prolate} example.  Corresponding regularization parameters computed using the full data are provided as horizontal lines for comparison.
     }\label{fig:example2a}
 \end{figure}

 \begin{figure}[b]
   \begin{center}
    \includegraphics[width=1.1\textwidth]{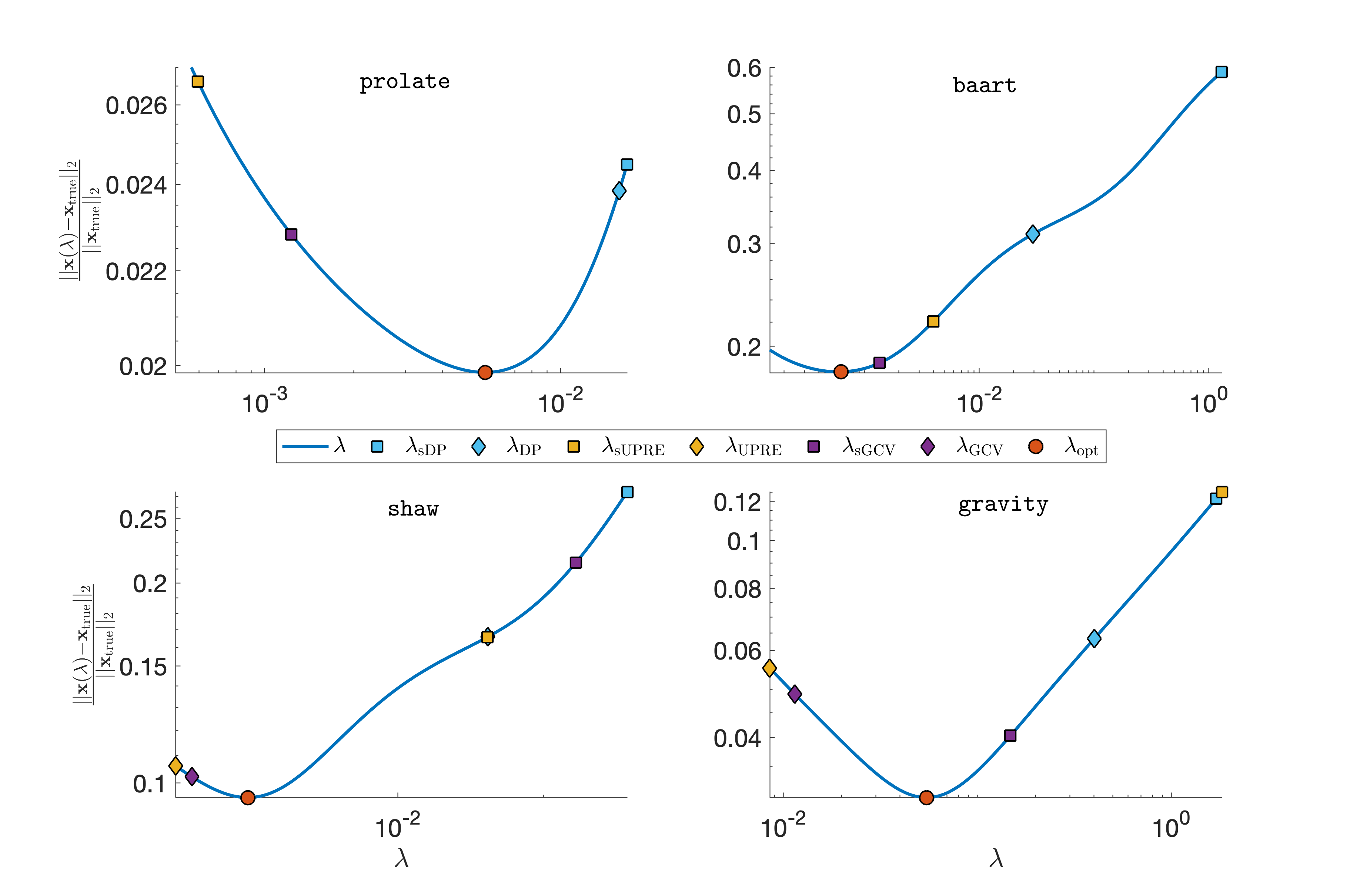}
   \end{center}
   \caption{Relative reconstruction errors of the sampled and full regularization methods for four test problems {\tt prolate}, {\tt baart}, {\tt shaw}, and {\tt gravity}. All solutions lie on the solid line, which corresponds to relative errors for Tikhonov solutions. Note that the UPRE and GCV estimation in the {\tt prolate} and {\tt baart} test problem underperform significantly and is therefore omitted. The relative errors for $\lambda_{\rm sUPRE}$ and $\lambda_{\rm DP}$ coincide in the {\tt shaw} example.}\label{fig:example2b}
 \end{figure}

 While we observe similar results for other test problems (results not shown), the sampled regularization parameters may not necessarily be close to the corresponding parameter for the full system.
 Nevertheless, the sampled regularization parameter selection methods often lead to appropriate reconstructions $\bfx_k(\lambda)$ after a moderate number of iterations $k$.
 Next, we investigate the relative reconstruction error $\norm[2]{\bfx_{10}(\lambda) -\bfx_{\rm true}}/\norm[2]{\bfx_{\rm true}}$ of sampled regularization methods after \emph{one} epoch. Figure~\ref{fig:example2b} illustrates results from four test problems ({\tt prolate}, {\tt baart}, {\tt shaw}, and {\tt gravity}). First note that by Theorem~\ref{thm:scwr}, all solutions are Tikhonov solutions for a $\lambda$ determined by the method, hence all relative reconstruction errors lie on a curve of relative errors for Tikhonov solutions.
 We observe that in terms of relative reconstruction error, the sampled regularization methods do not differ significantly from the full regularization methods. Note that all of the above parameter selection methods (including the standard DP, UPRE, and GCV) provide empirical estimations and thus may fail to provide good regularization parameters. Nevertheless, all of our sampled regularization parameter selection methods perform reasonably well on the test problems.

 \section{Numerical Results}
 \label{sec:numerics}
 For problems with a very large number of unknowns, such as those arising in pixel or voxel based image reconstruction, the described \sTik method is not feasible due to the construction of $n \times n$ matrix $\bfB_k.$  Although reduced models or subspace projection methods may be used to reduce the number of unknowns, obtaining a realistic basis for the solution may be difficult.  Instead, we describe various approximations of $\bfB_k$, which are related to methods described in Section~\ref{sec:SO}. In addition to being computationally feasible, all of these methods can take advantage of the adaptive regularization parameter selection methods described in Section~\ref{sec:regparselection}.

 These methods are based on the \sTik method.  In particular, we consider a sampled gradient (\texttt{sg}) method where the iterates are defined as~\eqref{eq:sTik2} where $\bfB_k = \left( \sum_{i=1}^{k}\Lambda_i\bfL\t\bfL + \bfI_n\right)^{-1}$ and a sampled block Kaczmarz (\texttt{sbK}) method where the iterates are defined as~\eqref{eq:sTik2} with $\bfB_k = \left( \sum_{i=1}^{k}\Lambda_i\bfL\t\bfL + \bfA_{k}\t\bfA_{k}\right)^{-1}$, hence just including the recent block $\bfA_k$. We also consider a \emph{limited-memory} version of \sTik called \texttt{slimTik}, which we describe below.  First notice that the $k$-th \sTik iterate is given by $\bfx_{k} = \bfx_{k-1} - \bfs_{k}$
 where
 \begin{align*}
 \bfs_{k} = \argmin_\bfs \norm[2]{ \begin{bmatrix}
   \bfA_{1} \\ \vdots \\ \bfA_{k-1} \\ \bfA_{k} \\ \sqrt{\sum_{i=1}^{k}\Lambda_i}\bfL
 \end{bmatrix}\bfs - \begin{bmatrix}
   \bf0\\ \vdots\\ \bf0 \\  \bfA_{k}\bfx_{k-1}-\bfb_{k} \\ \frac{\Lambda_k}{\sqrt{\sum_{i=1}^{k}\Lambda_i}} \bfL\bfx_{k-1}
 \end{bmatrix}  }^2.
 \end{align*}
 Note that with this reformulation, we must solve a least-squares problem with matrix
 $\begin{bmatrix}
   \bfA_{1}\t & \cdots & \bfA_{k}\t
 \end{bmatrix}\t$, which may get large for many samples.
 Thus, we select
 a memory parameter $r \in \mathbb{N}_0$ and define $\bfM_{k} = \begin{bmatrix}
   \bfA_{k-r}\t & \cdots & \bfA_{k-1}\t
 \end{bmatrix}\t \in \mathbb{R}^{r\ell \times n}$
 and $\bfA_{k-r} = \bf0$ for non-positive integers $k-r$. Then \slimTik iterates are given as $\bfx_{k} = \bfx_{k-1} - \tilde\bfs_{k}$ where
 \begin{align} \label{eq:slimTik}
 \tilde\bfs_{k} = \argmin_\bfs \norm[2]{ \begin{bmatrix}
   \bfM_k \\ \bfA_k\\ \sqrt{\sum_{i=1}^{k}\Lambda_i}\bfL
 \end{bmatrix}\bfs - \begin{bmatrix}
   \bfzero \\  \bfA_{k}\bfx_{k-1}-\bfb_{k} \\ \frac{\Lambda_k}{\sqrt{\sum_{i=1}^{k}\Lambda_i}} \bfL\bfx_{k-1}
 \end{bmatrix}  }^2.
 \end{align}
 Notice that in the case where $r=0$, \slimTik and \texttt{sbK} iterates are identical.
 First, we investigate the performance of {\tt sg}, {\tt sbK}, and {\tt slimTik} while taking advantage of the regularization parameter update described in Section~\ref{sec:regparselection}.  We use the {\tt gravity} example from Regularization Tools, where $\bfA \in \bbR^{1,000\times 1,000}$, $\bfL = \bfI_{1,000}$, and the noise level defined as $\frac{\norm[2]{\bfepsilon}}{\norm[2]{\bfA \bfx_{\rm true}}}$ is $0.01$.  The samples consist of $10$ blocks, each comprised of 100 consecutive rows of $\bfA$. The initial guess for the regularization parameter is chosen to be $0.1$ (the optimal overall regularization parameter in this example is approximately 0.0196), and we iterate for one epoch.

 In Figure~\ref{fig:example3} we provide the relative reconstruction errors per iteration for \texttt{sg}, \texttt{sbK}, \texttt{slimTik}, and \texttt{sTik}.
 Overall, we notice a correspondence between the amount of curvature information used to approximate the Hessian and an improvement in the relative reconstruction error.
 Although including more curvature results in greater computational costs and storage requirements, e.g., \texttt{sTik} may be infeasible for very large problems, the number of row accesses is the same for each method.
  In terms of parameter selection methods, sGCV performs better than sUPRE and sDP for this example.  The relative reconstruction error corresponding to the best overall Tikhonov solution is provided as the horizontal line.
 Although the results are not shown here, we note that the relative reconstruction errors will become very large for all of these methods if we do not include regularization.

 \begin{figure}[t]
   \begin{center}
    \includegraphics[width=\textwidth]{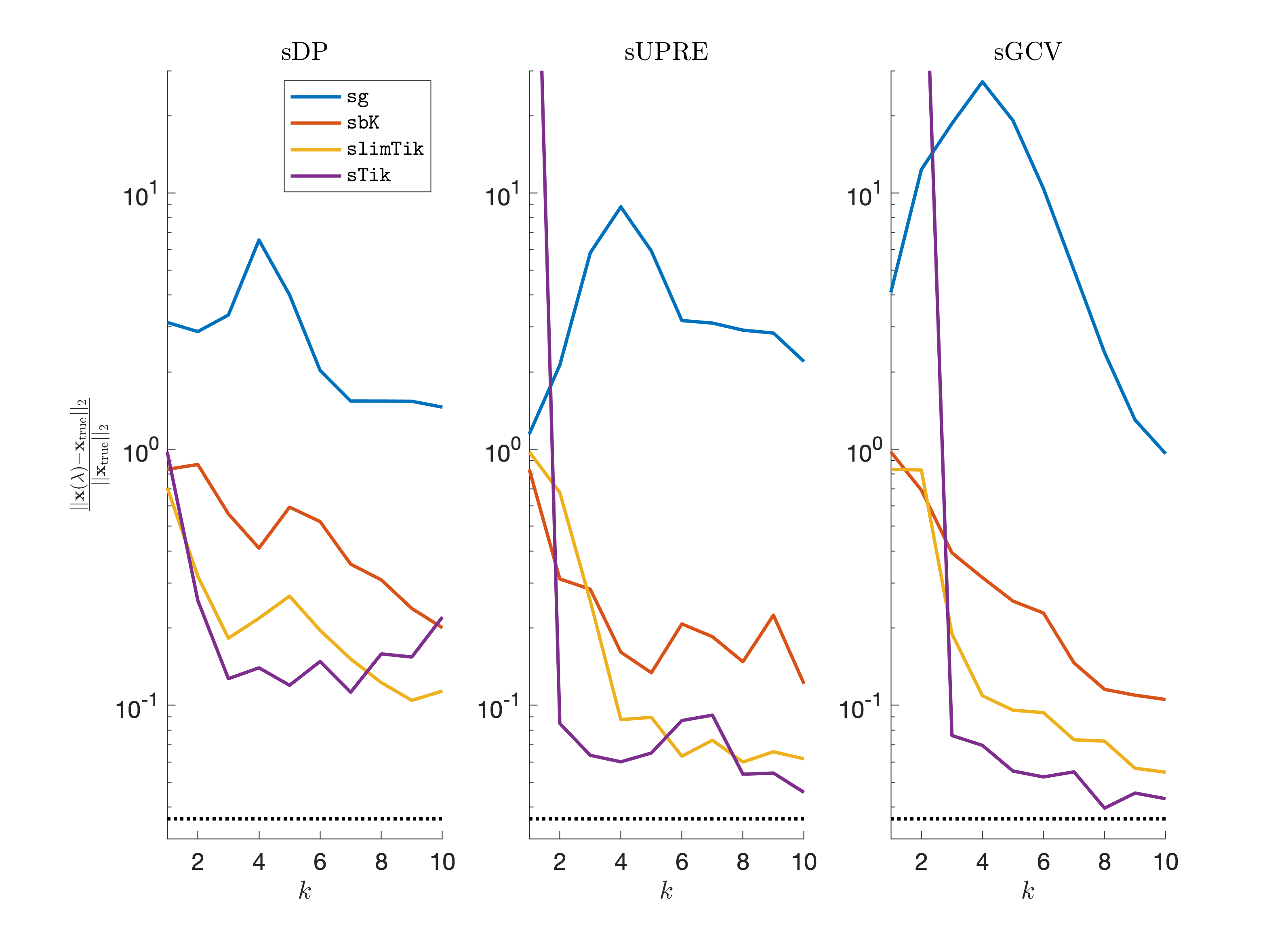}
   \end{center}
   \caption{Comparison of relative reconstruction errors for \texttt{sg}, \texttt{sbK}, \texttt{slimTik}, and \texttt{sTik} iterates for {\tt gravity} using various sampled regularization parameter selection methods. We compare sDP, sUPRE, and sGCV.  The horizontal black line is the relative error corresponding to the optimal regularization parameter for the full problem, which is not feasible to get in practice. } \label{fig:example3}
 \end{figure}
 Having demonstrated that regularization parameter update methods can be incorporated in a variety of stochastic optimization methods, we next investigate the performance of these limited-memory methods for super-resolution image reconstruction.
 The basic goal of super-resolution imaging is to reconstruct an $n \times n$  high-resolution image represented by a vector $\bfx_{\text{true}} \in \mathbb{R}^{n^2}$
 given $M$ low-resolution images of size $\ell \times \ell$ represented by
 $\bfb_1 \cdots, \bfb_{M},$ where $\bfb_{i} \in \mathbb{R}^{\ell^2}\,.$
 The forward model for each low-resolution image is given as
  $$\bfb_{i} = \bfR\bfS_{i} \bfx_{\text{true}} + \bfepsilon_i\,,$$
 where $\bfR \in \mathbb{R}^{\ell^{2} \times n^{2}}$  is a restriction matrix, $\bfS_{i} \in \mathbb{R}^{n^{2}\times n^{2}}$ represents an affine transformation that may account for shifts, rotations, and scalar multiplications, and
 $ \bfepsilon_{i} \sim \mathcal{N}\left({\bf{0}}_{\ell^{2}}, \sigma^{2}{\bfI}_{\ell^{2}} \right)$. To reconstruct a high-resolution image, we solve the Tikhonov problem,
 \begin{equation*}
   \min_{\bfx} \norm[2]{\left[\begin{array}{c}\bfR\bfS_{1} \\ \vdots \\ \bfR\bfS_{M} \end{array}\right] \bfx-\left[\begin{array}{c} \bfb_{1} \\ \vdots\\ \bfb_{M} \end{array}\right]}^{2} + \lambda\norm[2]{\bfL\bfx}^{2}.
 \end{equation*}
 For cases where the low-resolution images are being streamed or where the number of low-resolution images is very large, standard iterative methods may not be feasible.  Furthermore, it can be very challenging to determine a good choice of $\lambda$ prior to solution computation \cite{chung2006numerical,huang2008three,park2003super}.
 \begin{figure}[b]
   \begin{center}
     \begin{tabular}{cc}
     \includegraphics[width=0.66\textwidth]{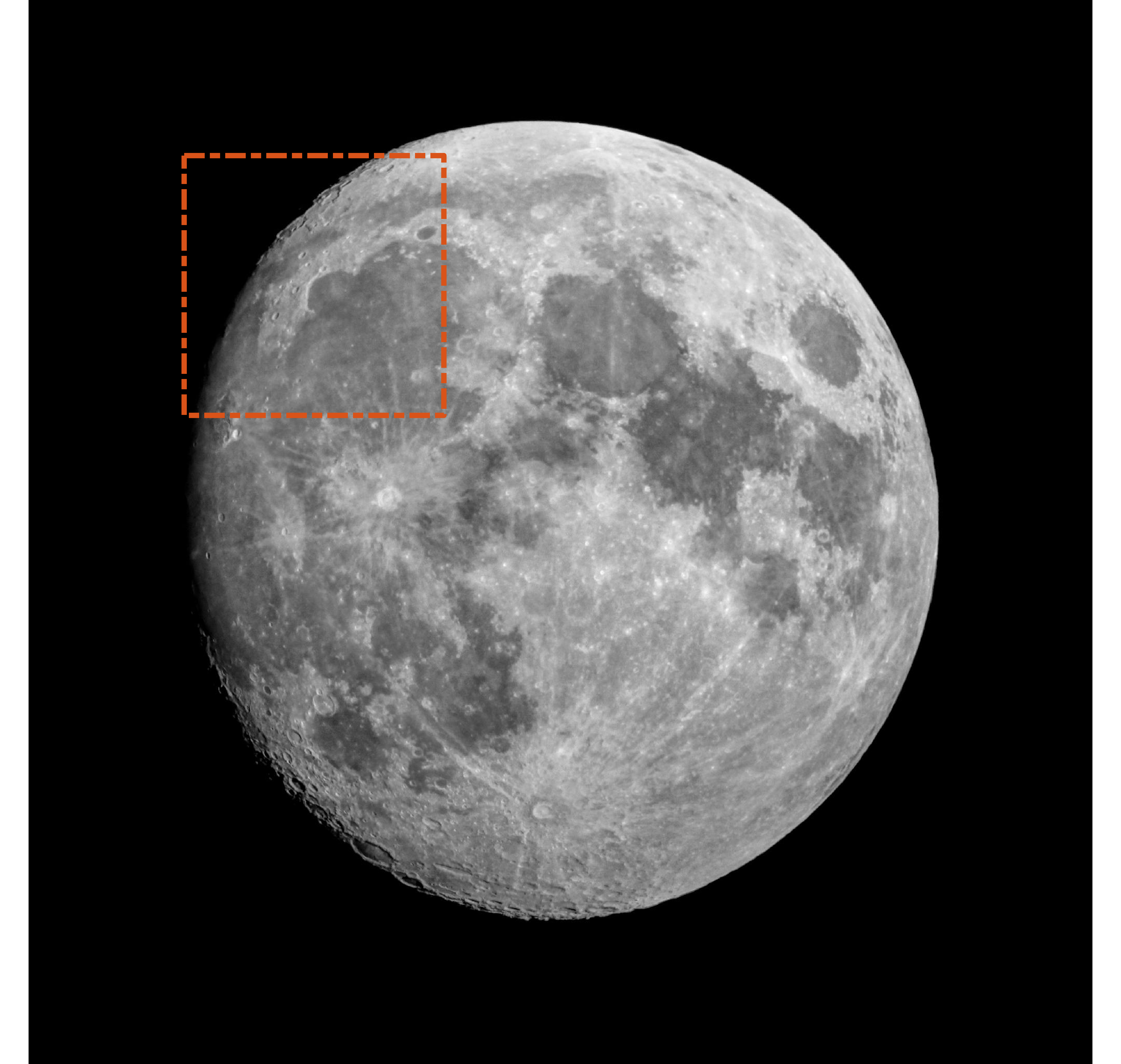} & \hspace*{-4ex}
       \begin{minipage}[H]{0.262\textwidth}
         \vspace*{-52ex}
           \begin{tabular}{c}
             \includegraphics[width=\textwidth]{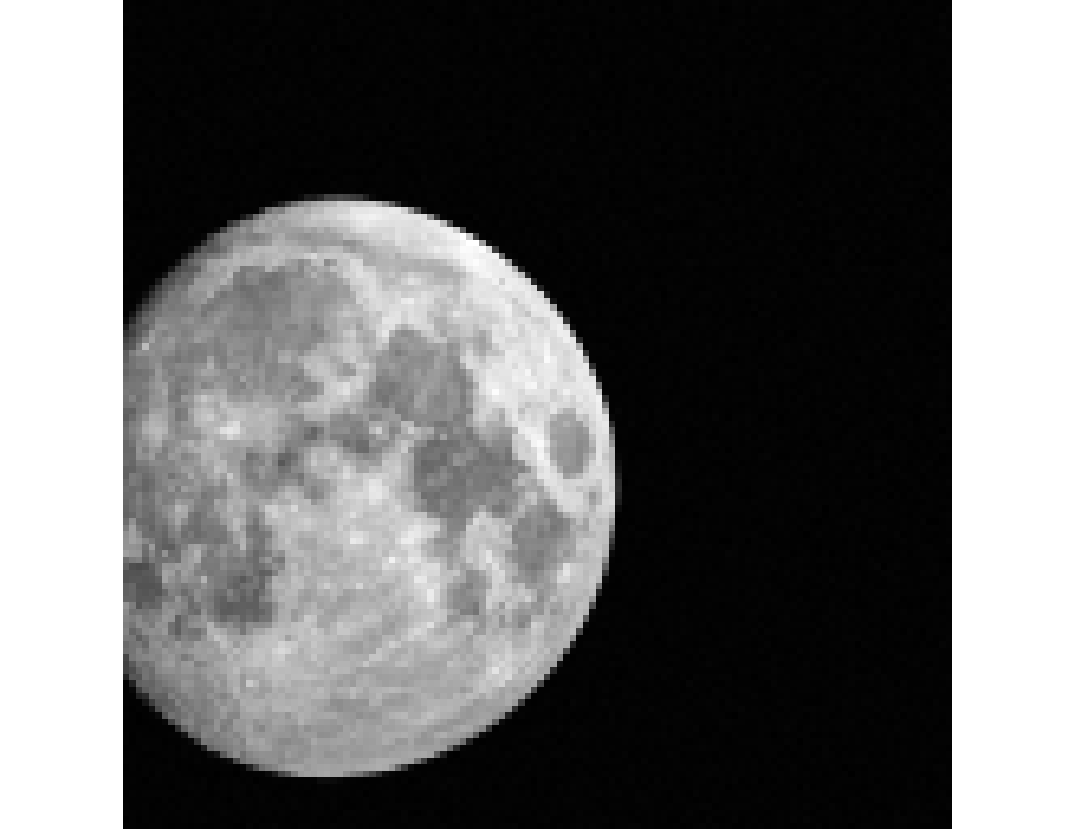}\\
             \includegraphics[width=\textwidth]{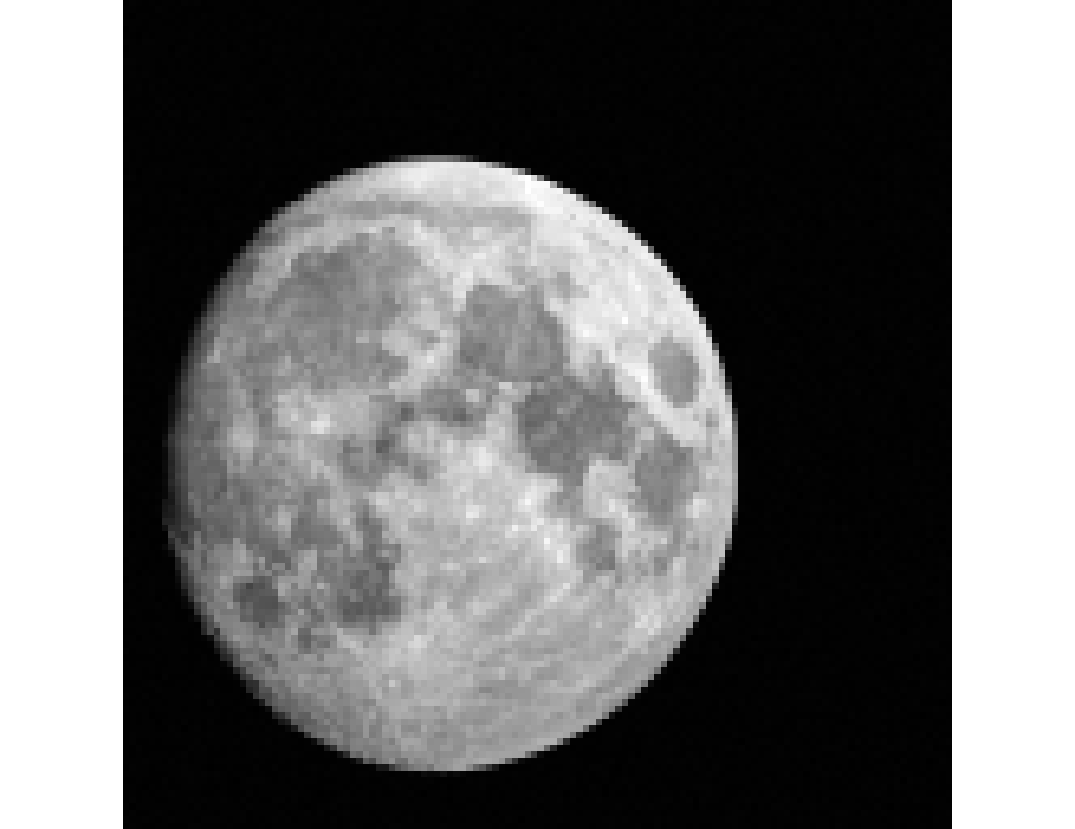}\\
             \includegraphics[width=\textwidth]{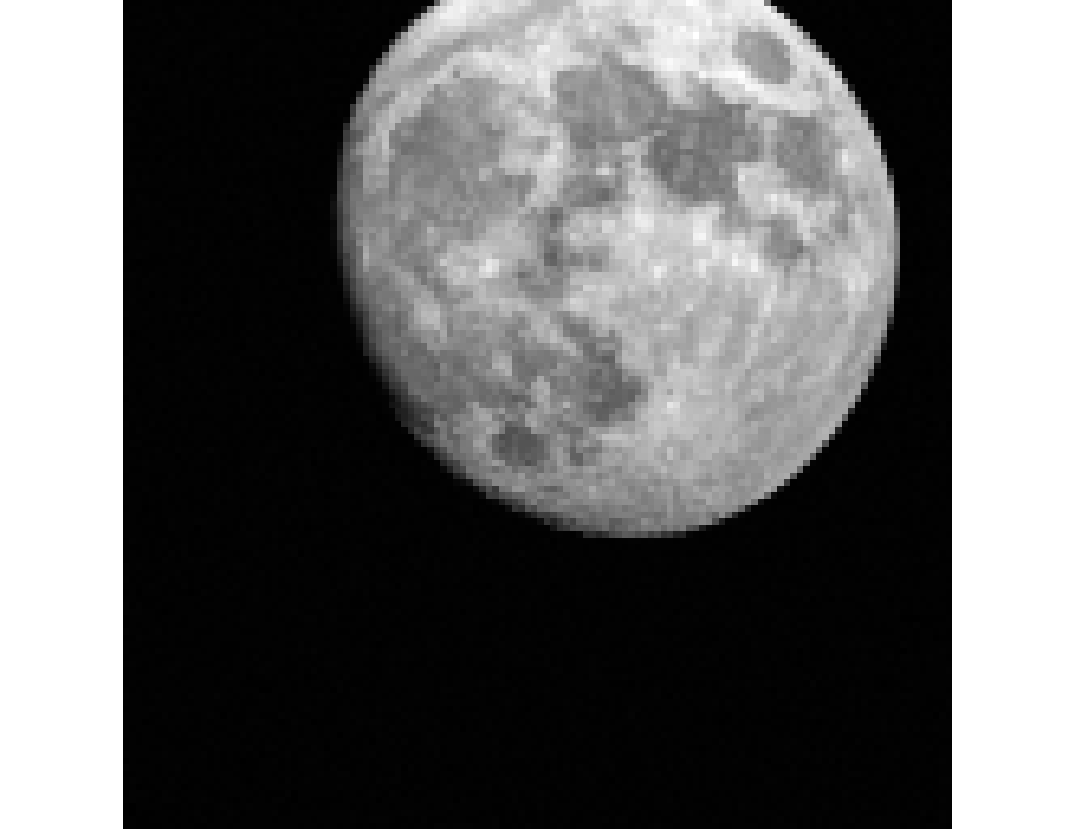}
           \end{tabular}
         \end{minipage}
   \end{tabular}
   \end{center}
   \caption{Super-resolution imaging example.  On the left is the true high-resolution image, and on the right are three sample low-resolution images. The red-box corresponds to sub-images shown in Figure \ref{fig:SRrecon}.}\label{fig:SRimages}
 \end{figure}

 For our example, we have $30$ images of size $128 \times 128$, and we wish to reconstruct a high-resolution image of size $2,\!048 \times 2,\!048$.  In Figure~\ref{fig:SRimages}, we provide the true high-resolution image, which is an image of the moon \cite{NASA14}, and three of the low-resolution images.
 Here, $\bfA_{i} = \bfR\bfS_{i} \in \bbR^{128^2 \times 2,048^2}$.
 Due to the inherent partitioning of the problem, we take $\bfW\t_i \in\bbR^{128^2 \times 30 \cdot 128^2}$ to be a matrix such that $\bfW\t_{i}\bfA= \bfA_{i}$; certainly these $\bfW_{i}$ matrices are never computed.
 For the simulated low-resolution images, Gaussian white noise was added such that the noise level for each image is $0.01$ and take $\bfL=\bfI_{2,048}$.

 We compare the performances of \texttt{sg}, \texttt{sbK}, and \slimTik, including our sampled regularization parameter update methods sDP, sUPRE, and sGCV.
 The true noise variance is used for sDP and sUPRE, and the memory parameter for \slimTik is $r = 2$.
 Each iteration of \texttt{sbK} and \slimTik requires a linear solve, which can be handled efficiently by reformulating the problem as a least squares problem as in equation \eqref{eq:slimTik}, and using standard techniques such as LSQR \cite{PaSa82b,paige1982lsqr}. These iterative methods can also be used to update the regularization parameter. Furthermore, we use the Hutchison trace estimator to efficiently evaluate the trace term in sGCV and sUPRE, see~\eqref{eqn:sampledUPRE} and~\eqref{eq:sampledGCV}.  More specifically, rather than compute $128^2$ linear solves, we note that if $\bfv$ is a random variable such that $\mathbb{E} \, \bfv\bfv\t = \bfI_{128^2}$ , then
 \begin{equation*}
   \trace{\bfW_{\tau(k)}\t \bfA\bfC_k(\lambda)\bfW_{\tau(k)}} = \bbE \bfv\t\bfW_{\tau(k)}\t \bfA\bfC_k(\lambda)\bfW_{\tau(k)}\bfv.
 \end{equation*}
 Here we use the Rademacher distribution where the entries of $\bfv$ are $v_i = \pm 1$ with equal probability.
  We use a single realization of $\bfv$ to approximate the trace, hence resulting in just one linear solve \cite{haber2012effective, avron2011randomized, saibaba2017randomized}.

 Relative reconstruction errors are provided in Figure~\ref{fig:example4}, and sub-images of the reconstructions are provided in Figure~\ref{fig:SRrecon}.
 We observe that, in general, sDP errors perform more erratically compared to sUPRE and sGCV. Notice that for sUPRE and sGCV, \texttt{sbK} produces higher reconstruction errors compared to \texttt{sg}, which may be attributed to insufficient global curvature information. Furthermore, we observe that \slimTik reconstructions contain more details than \texttt{sg} and \texttt{sbK} reconstructions and reconstructions without regularization may become contaminated with noise.

 \begin{figure}[t]
    \begin{center}
     \includegraphics[width=\textwidth]{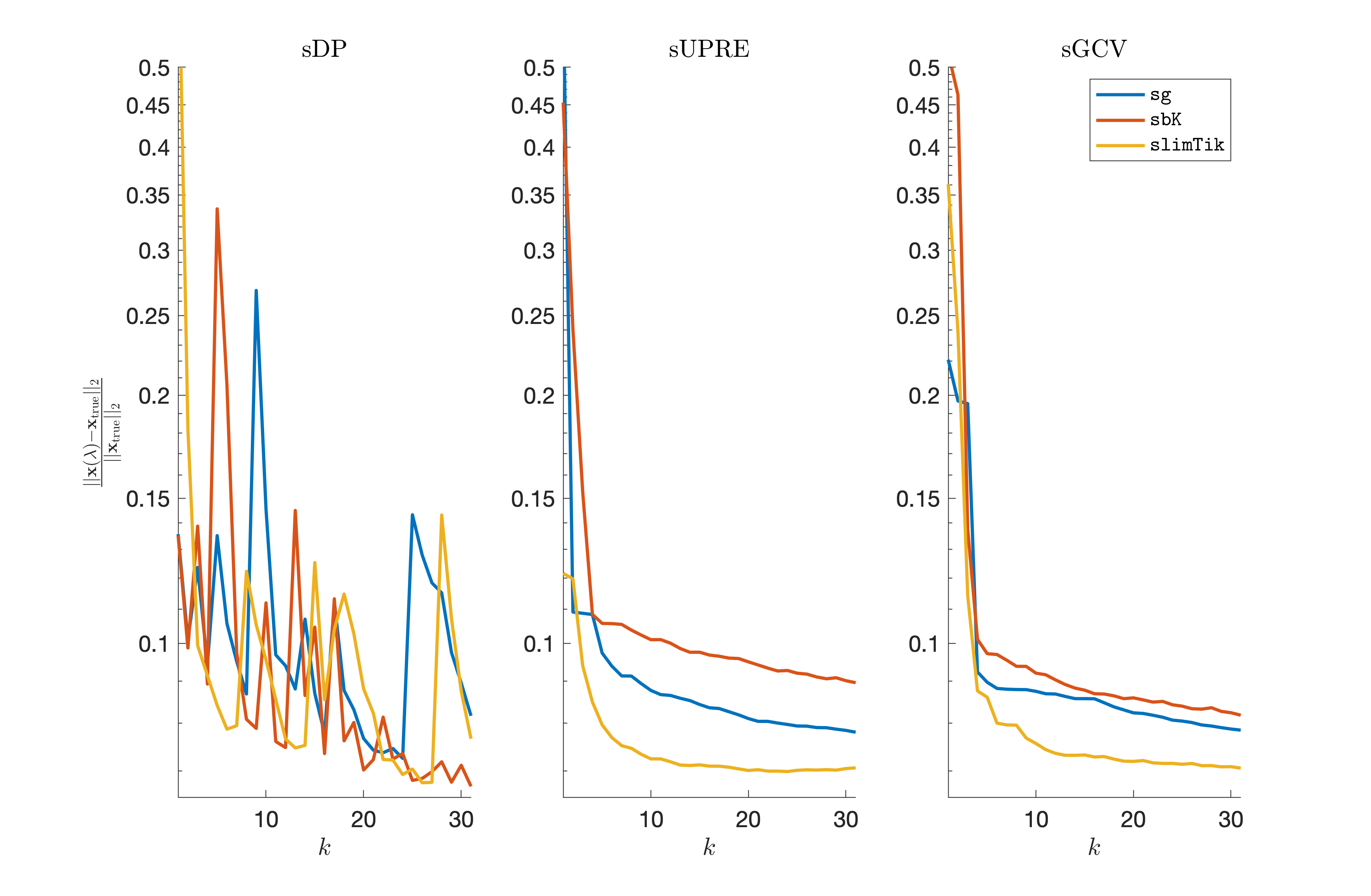}
    \end{center}
    \caption{Relative reconstruction errors for the super-resolution imaging example for one epoch. We note that sUPRE and sGCV produce good reconstructions. Additionally, \slimTik produces a smaller relative reconstruction error, since it is using more curvature information.}\label{fig:example4}
  \end{figure}
 \begin{figure}[t]
   \begin{center}
     \setlength\tabcolsep{0.5pt}
     \begin{tabular}{ccccc}
        & sDP & sUPRE & sGCV &  none\\
       \rotatebox{90}{\hspace*{8ex}{\tt sg}} &
       \includegraphics[width=0.245\textwidth]{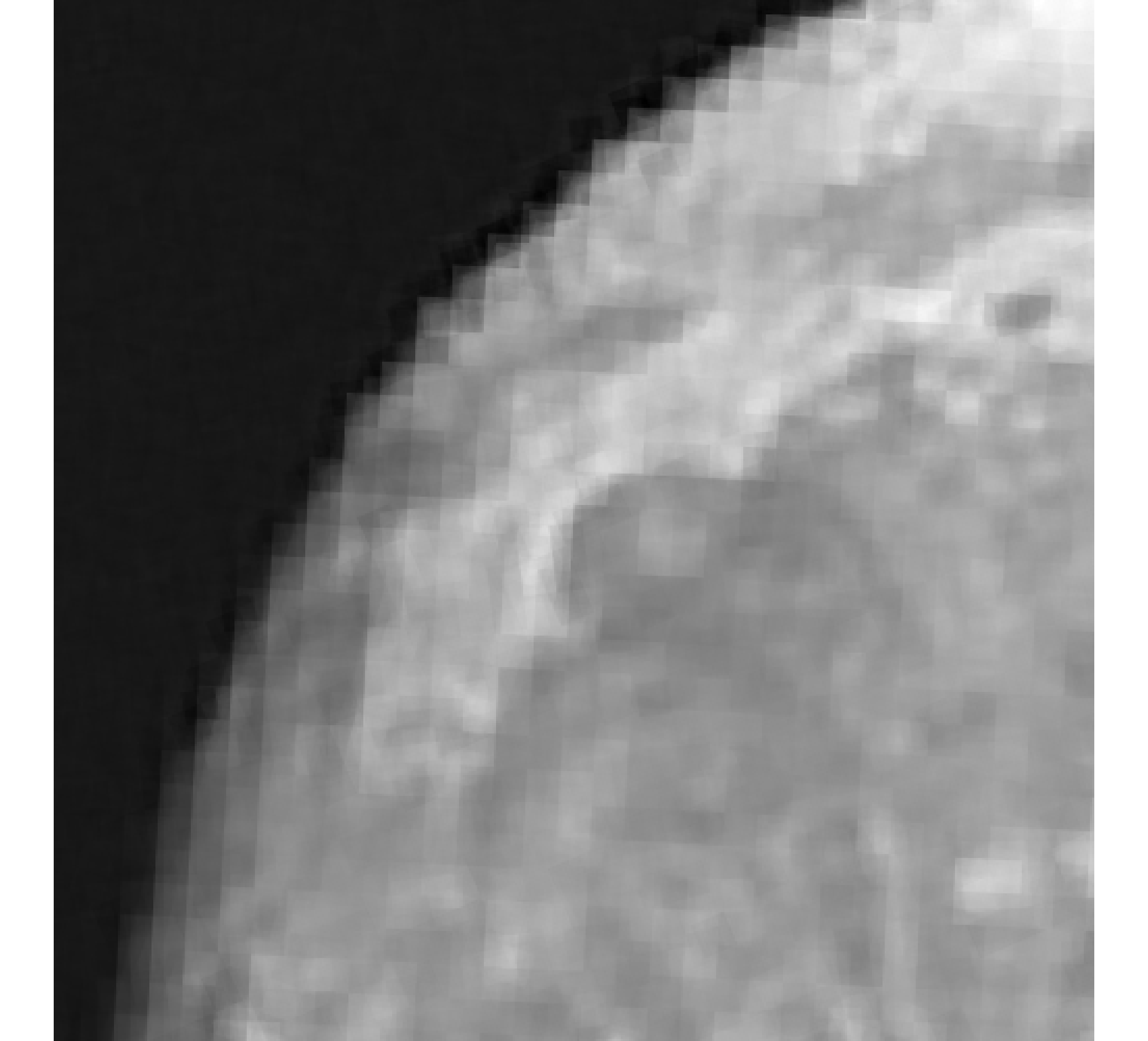} &
       \includegraphics[width=0.245\textwidth]{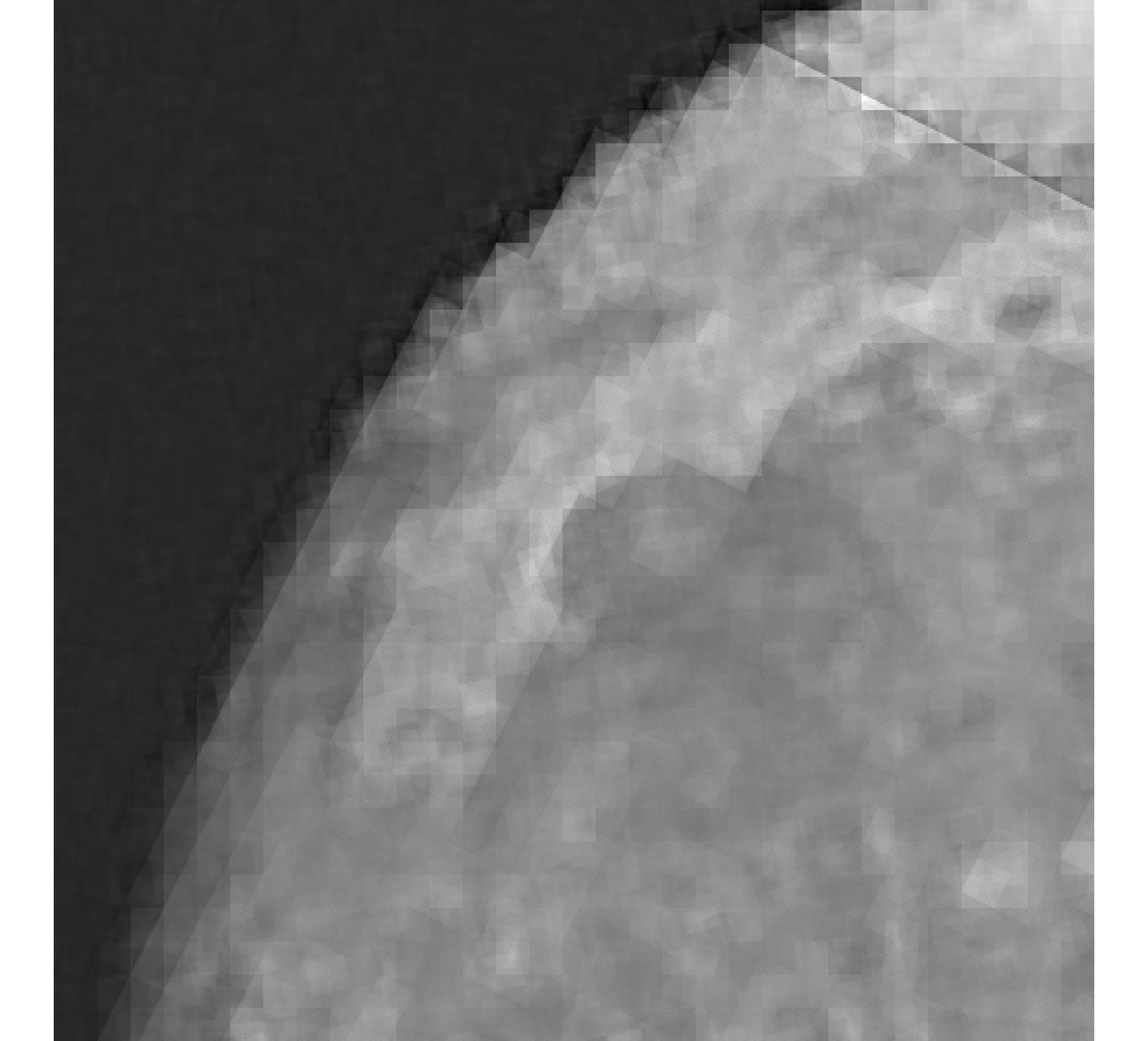} &
       \includegraphics[width=0.245\textwidth]{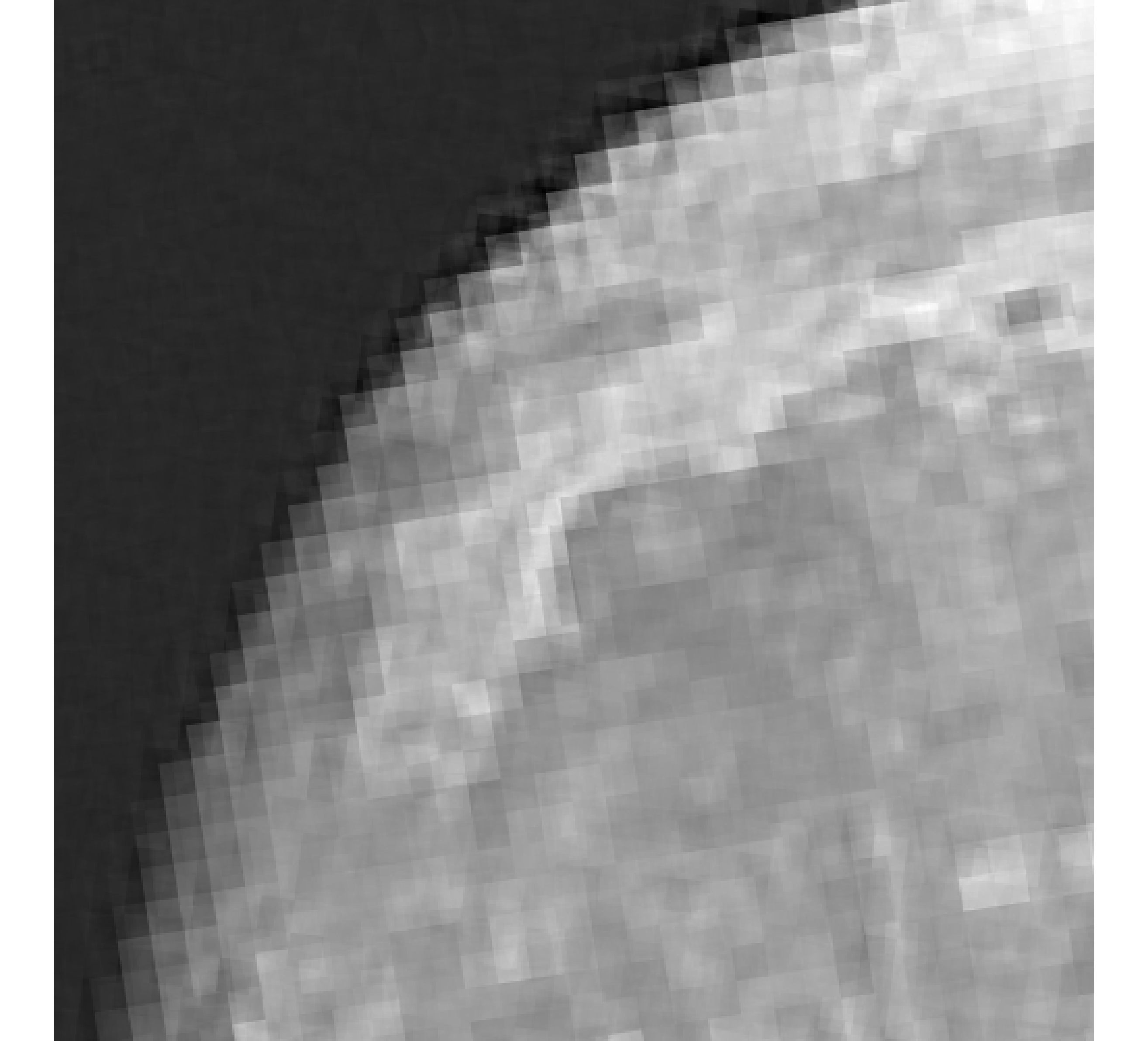} &
       \includegraphics[width=0.245\textwidth]{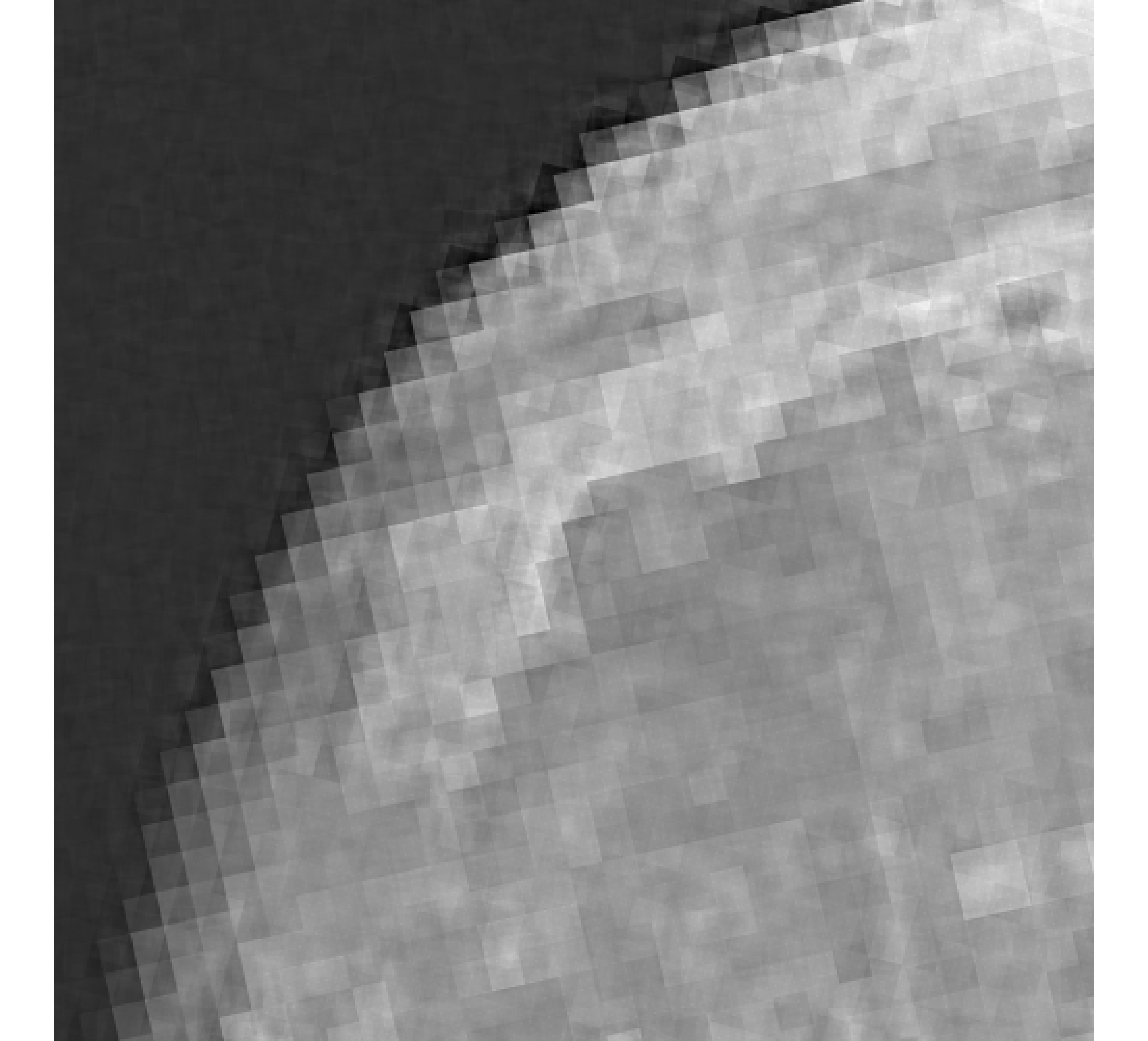} \\
       \rotatebox{90}{\hspace*{8ex}{\tt sbK}} &
       \includegraphics[width=0.245\textwidth]{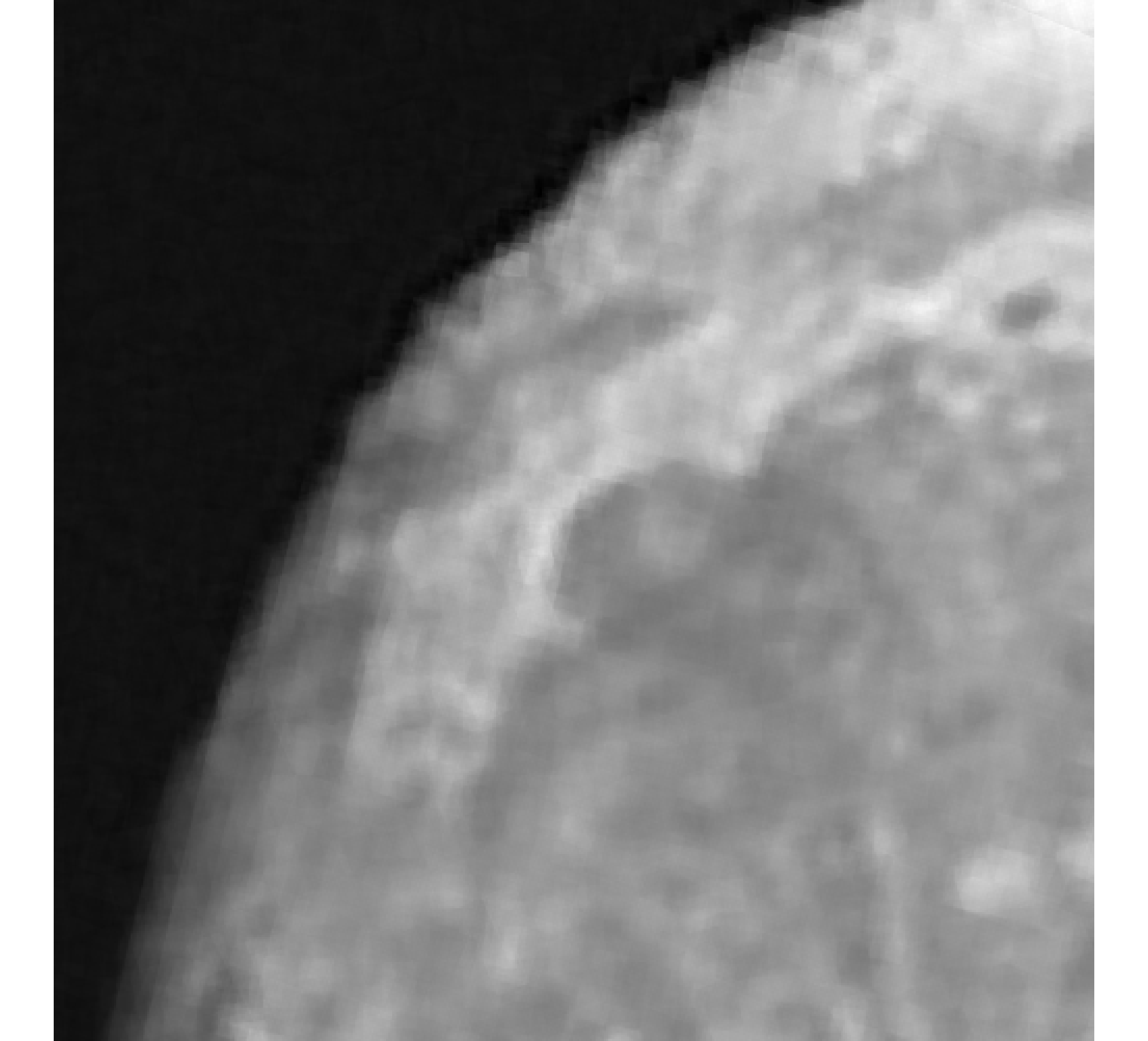} &
       \includegraphics[width=0.245\textwidth]{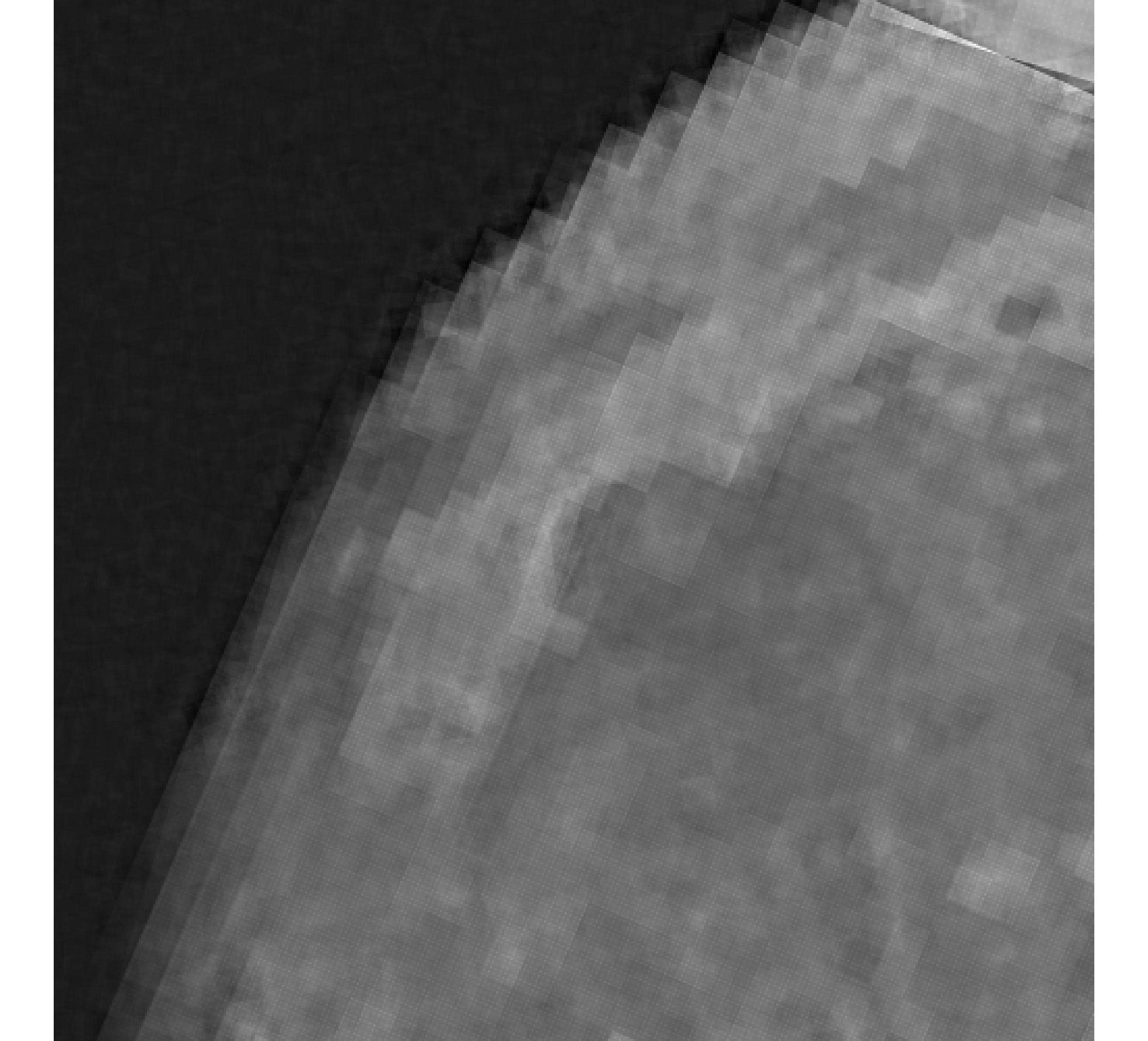} &
       \includegraphics[width=0.245\textwidth]{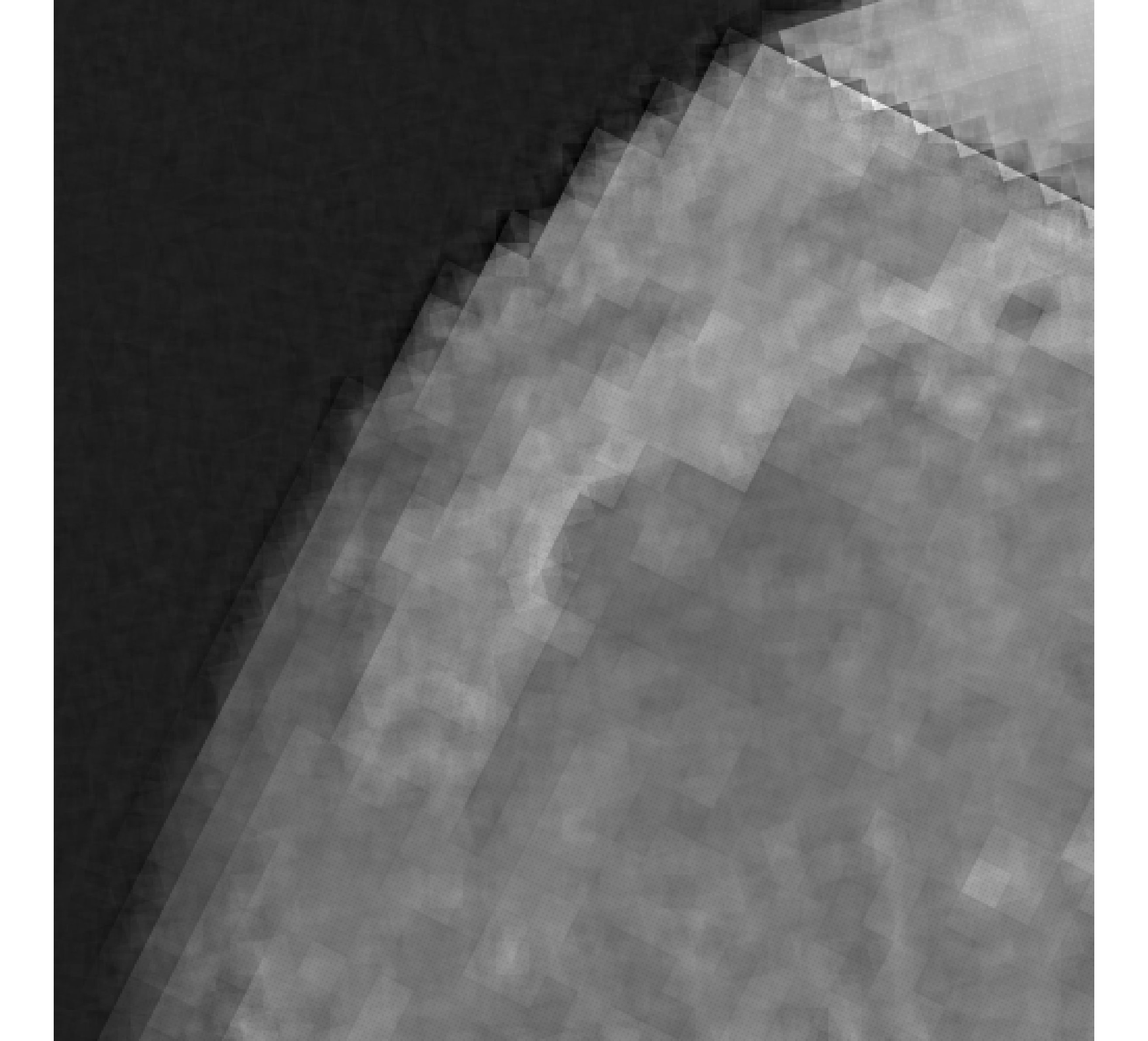} &
       \includegraphics[width=0.245\textwidth]{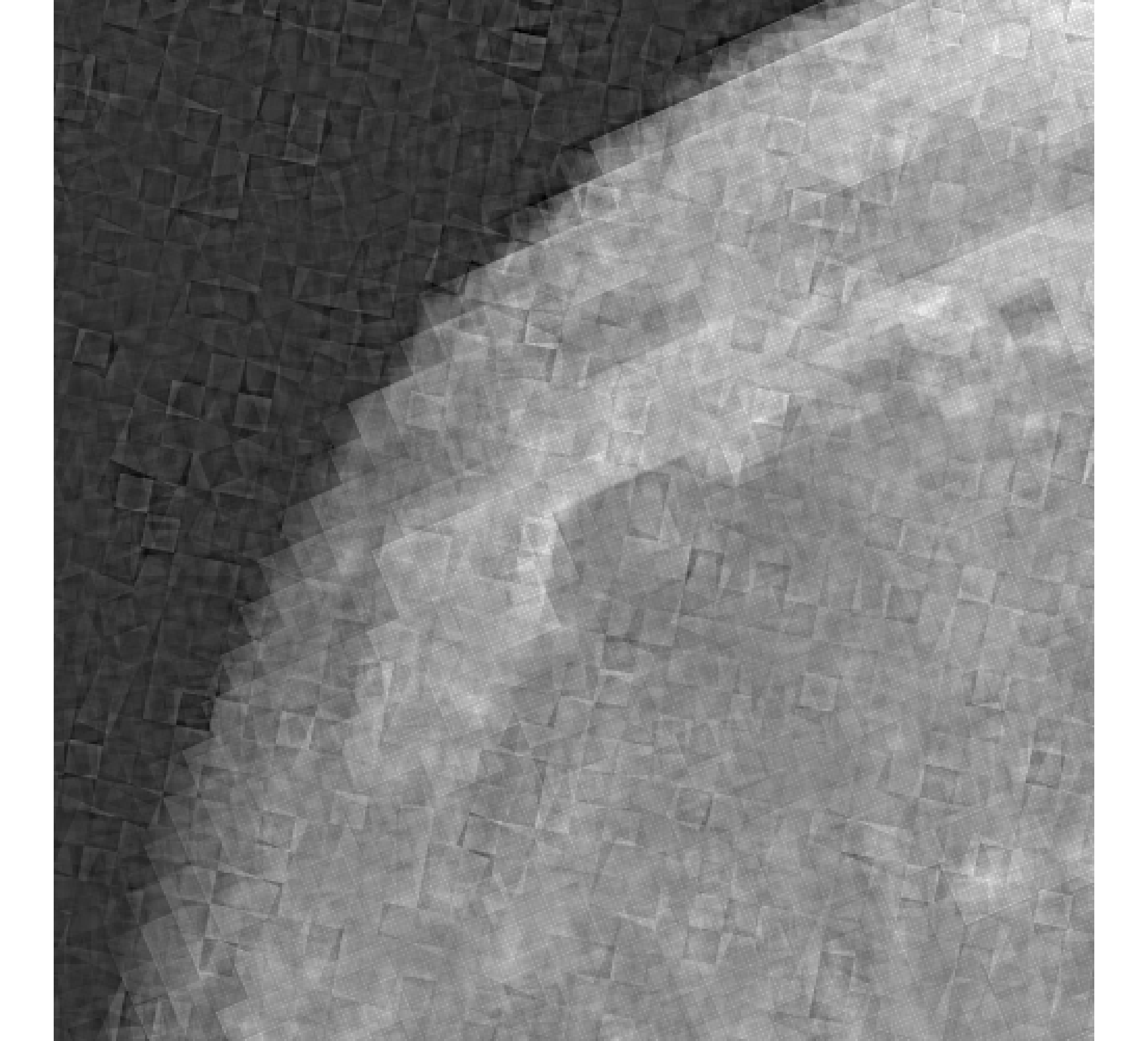} \\
       \rotatebox{90}{\hspace*{5ex}{\tt slimTik}} &
       \includegraphics[width=0.245\textwidth]{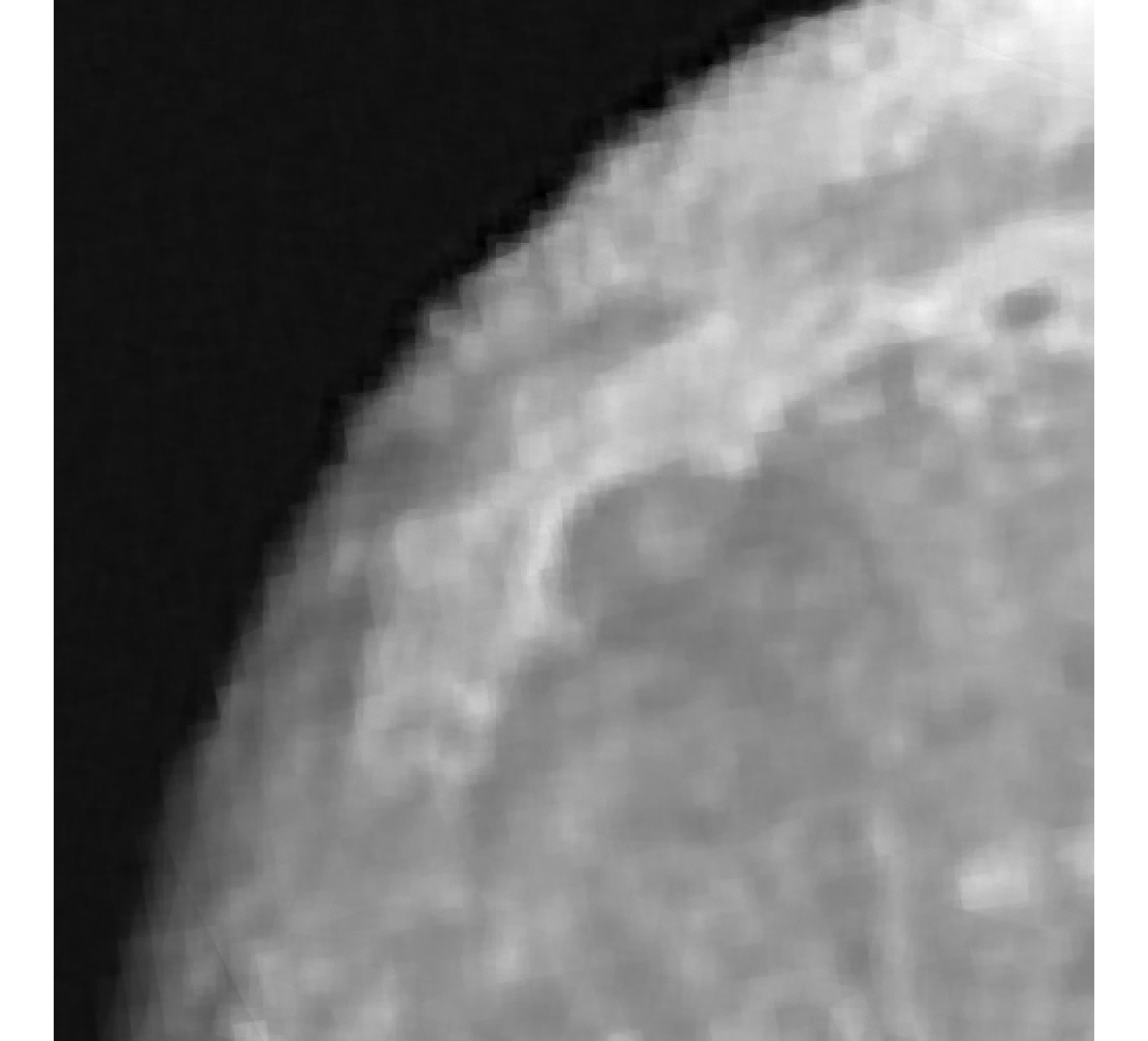} &
       \includegraphics[width=0.245\textwidth]{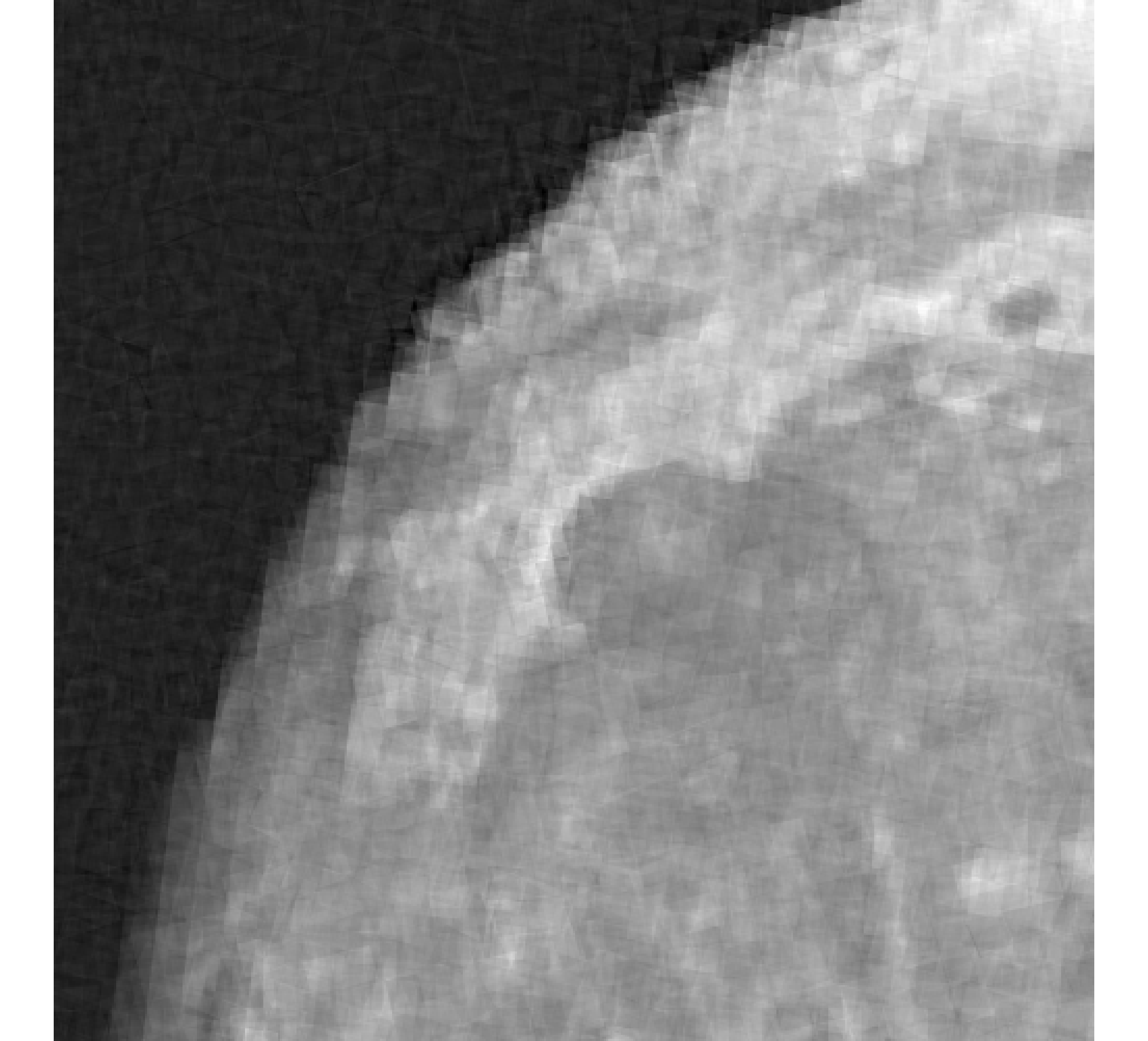} &
       \includegraphics[width=0.245\textwidth]{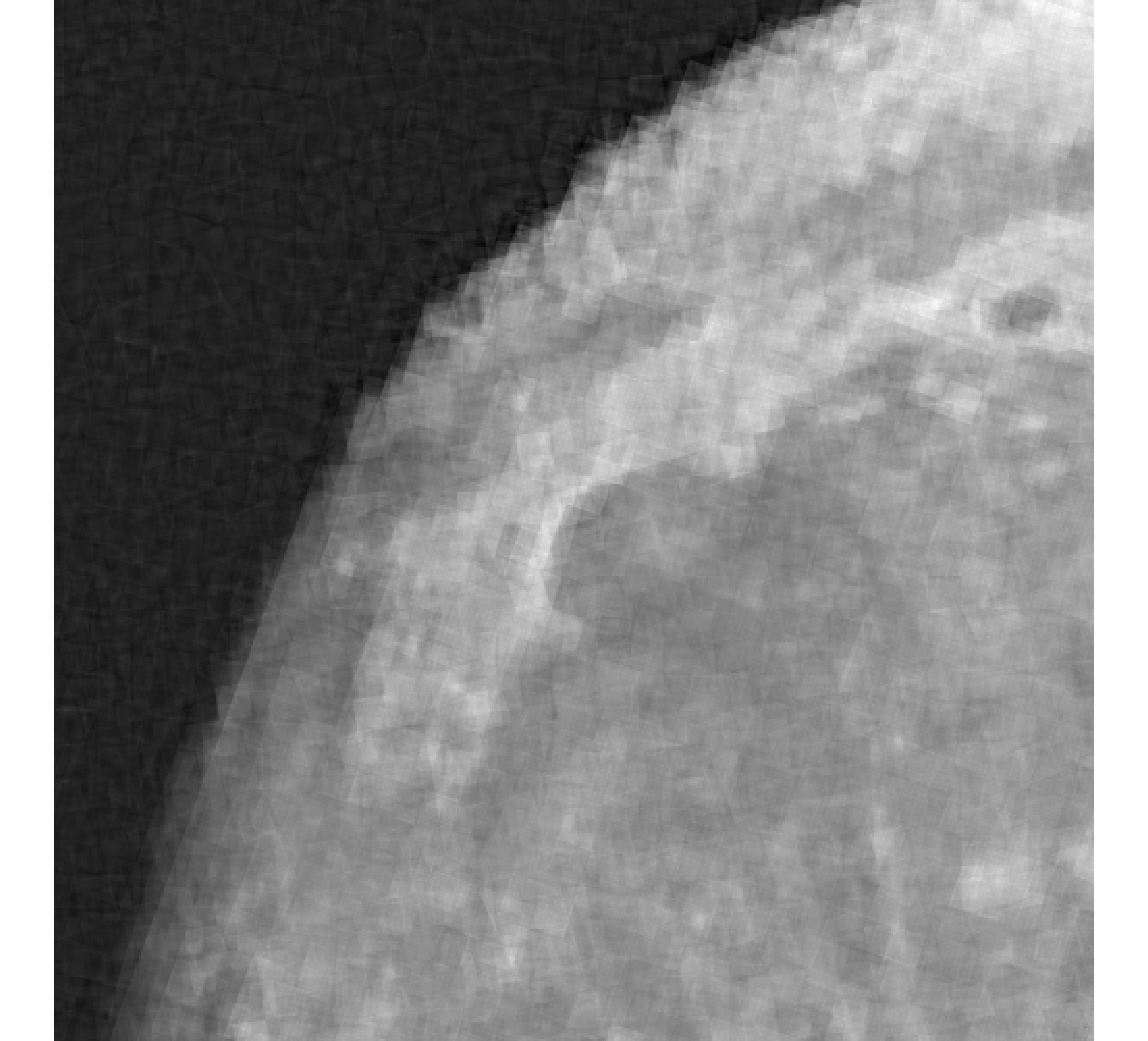} &
       \includegraphics[width=0.245\textwidth]{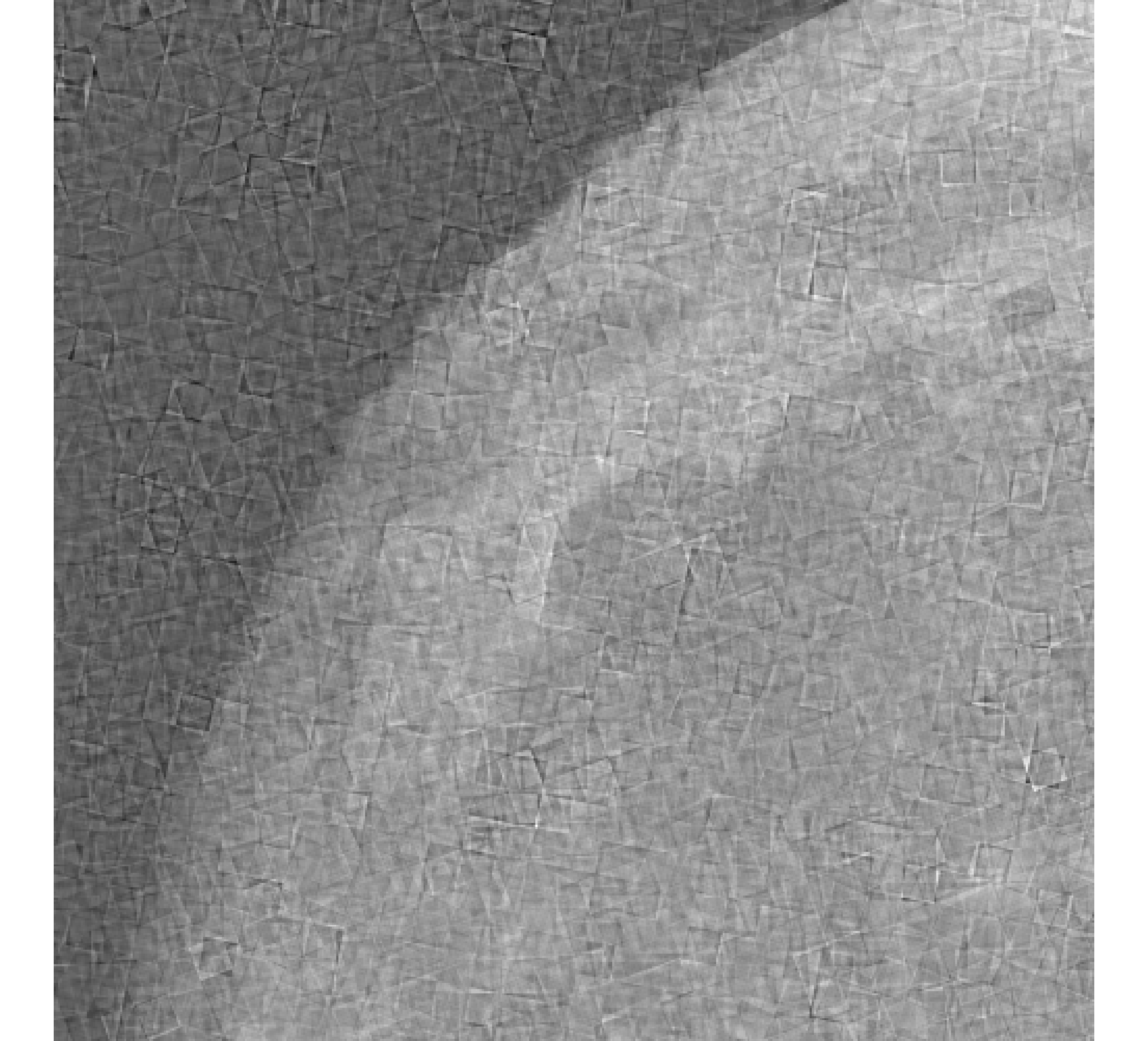} \\
     \end{tabular}
   \end{center}
   \caption{Sub-images of the reconstructed images for the super-resolution imaging example.  Reconstructions correspond to {\tt sg}, {\tt sbK}, and {\tt slimTik} with regularization parameter updates computed using sDP, sUPRE, and sGCV.  For comparison, we provide reconstructions corresponding to no regularization, i.e., $\lambda=0$.}
   \label{fig:SRrecon}
 \end{figure}

 \section{Conclusions}

 \label{sec:conclusions}
 In this work we describe iterative sampled Tikhonov methods for solving inverse problems for which it is not feasible to access the data ``all-at-once''. Such methods are necessary when handling data sets that do not fit in memory and also can naturally handle streaming data or ``online'' problems.

 We investigate two iterative methods, \rrls and \sTik, and show that under various sampling schemes, \rrls iterates converge asymptotically to the unregularized solution, while \sTik iterates converge to a Tikhonov-regularized solution.  Although the sampling mechanisms that we discuss do not play a role in the asymptotic convergence, they do allow for interesting interpretations.  In particular, for random cyclic sampling we can characterize the iterates as Tikhonov solutions after every epoch, providing insight into the path that the iterates take towards the solution. Note that this primarily applies to the massive-scale inverse problems where we have the opportunity to choose a sampling method.
 Furthermore, for iterative methods where the regularization parameter can be updated during the iterative process (e.g., \sTik),
 we describe sampled variants of existing regularization parameter selection methods to update the parameter.
 Using a number of well-known data sets, we show empirically that sampled Tikhonov methods with automatic regularization parameter updates can be competitive.
 For very large inverse problems, we describe a limited-memory version of \sTik, and we demonstrate the efficacy of the limited-memory approach on a standard benchmark dataset as well as on a streaming super-resolution image reconstruction problem.

 Future directions of research include developing an asymptotic analysis of \slimTik and a non-asymptotic analysis of the general sampling algorithms.  This would involve bounding the mean square error at a fixed iteration $k$, which may help to explain the quick initial convergence seen in the numerical experiments. Another open question is how to accelerate convergence by selecting $\bfW_{\tau}$ to sample ``important" parts of the problem, e.g., using sketching matrices \cite{drineas2012fast, drineas2011faster}.
 Finally, extensions to nonlinear inverse problems would require more advanced convergence analyses and further algorithmic developments, e.g., incorporating adaptive regularization parameter selection within stochastic LBFGS \cite{Byrd2016, mokhtari2015global}.

 \appendix
 \section{Proofs for Section~\ref{sec:rrls}} \label{sec:appendix1}

 \begin{proof}[Proof of Theorem \ref{thm:sqnrls}]
 For (ii), note that the solution of the least squares problem~\eqref{eq:lskl} is given by
   \begin{equation*}
     \bfx(\lambda_k) = \bfB_k \sum_{i=1}^{k} \bfA_{i}\t  \bfb_{i}.
   \end{equation*}
   Noticing the relationship $\bfB_k^{-1} = \bfB_{k-1}^{-1} + \bfA_{k}\t \bfA_{k} + \Lambda_k\bfL\t\bfL$, we get the following equivalencies for the \sTik iterates
   \begin{align*}
     \bfx_k &= \bfx_{k-1} - \bfB_k\left(\bfA_{k}\t(\bfA_{k}\bfx_{k-1}-\bfb_{k})+\Lambda_k\bfL\t\bfL\bfx_{k-1}\right)\\
              &= \bfB_k\left( \bfB_k^{-1}\bfx_{k-1} - \bfA_{k}\t \bfA_{k}\bfx_{k-1}+\bfA_{k}\t \bfb_{k} -\Lambda_k\bfL\t\bfL\bfx_{k-1}\right)\\
              &= \bfB_k\left( \bfB_{k-1}^{-1}\bfx_{k-1} +\bfA_{k}\t \bfb_{k}\right)
              = \bfB_k\sum_{i = 1}^{k}\bfA_{i}\t \bfb_{i} = \bfx(\lambda_k).
   \end{align*}
 A similar proof can be made for (i).
 \end{proof}

 \begin{proof}[Proof of Theorem \ref{thm:converge_xls}]
 \begin{enumerate} \item
   From Theorem \ref{thm:sqnrls} for any $k\in \bbN$ we have
   \begin{align*}
     \bfy_k  & = \left(\lambda\bfL\t\bfL + \sum_{i=1}^k \bfA\t\bfW_{\tau(i)}\bfW_{\tau(i)}\t\bfA\right)^{-1}\left(\sum_{i=1}^k\bfA\t\bfW_{\tau(i)} \bfW_{\tau(i)}\t\bfb + \lambda\bfL\t\bfL\bfy_0 \right) \\
     & = \left(\frac{\lambda\bfL\t\bfL + \sum_{i=1}^k \bfA\t\bfW_{\tau(i)}\bfW_{\tau(i)}\t\bfA}{k}\right)^{-1}\left(\frac{\sum_{i=1}^k \bfA\t\bfW_{\tau(i)}\bfW_{\tau(i)}\t\bfb + \lambda\bfL\t\bfL\bfy_0 }{k}\right).
   \end{align*}
   Using the fact that $\bbE \, \bfW_{\tau(i)}\bfW_{\tau(i)}\t = \tfrac{\ell}{m}\bfI_m$ (see equation~\eqref{eq:Wexpectation}), by the law of large numbers and Slutsky's theorem for a.s. convergence \cite{tenorio2017introduction}
         \begin{align*}
       \frac{\sum_{i=1}^k \bfA\t\bfW_{\tau(i)}\bfW_{\tau(i)}\t\bfb + \lambda\bfL\t\bfL\bfy_0 }{k}  & \arras  \frac{\ell}{m}\bfA\t\bfb,
       \end{align*}
      and
       \begin{equation*}
         \left(\frac{\lambda\bfL\t\bfL + \sum_{i=1}^k \bfA\t\bfW_{\tau(i)}\bfW_{\tau(i)}\t\bfA}{k}\right)^{-1}\arras \frac{m}{\ell}\left(\bfA\t\bfA\right)^{-1}.
       \end{equation*}
 and therefore
   \begin{equation*}
     \bfy_k \arras \left( \bfA\t\bfA \right)^{-1}\bfA\t\bfb = \bfx(0).
   \end{equation*} $\left.\right.$
 \item
   In a similar fashion, for any $k\in \bbN$ we have
 \begin{equation*}
   \bfx_k  = \left(\frac{ \sum_{i=1}^k \Lambda_i\bfL\t\bfL + \bfA\t\bfW_{\tau(i)}\bfW_{\tau(i)}\t\bfA}{k}\right)^{-1}\left(\frac{\sum_{i=1}^k \bfA\t\bfW_{\tau(i)}\bfW_{\tau(i)}\t\bfb}{k}\right).
 \end{equation*}
    Using the fact that $\bbE \, \bfW_{\tau(i)}\bfW_{\tau(i)}\t = \tfrac{\ell}{m}\bfI_m$ and $\lim_{k\to\infty}\frac{\sum_{i=1}^k \Lambda_i\bfL\t\bfL}{k} = \frac{\ell}{m}\lambda\bfL\t\bfL$,
    we have
       \begin{equation*}
         \frac{\sum_{i=1}^k \bfA\t\bfW_i\bfW_i\t\bfb}{k}\arras \frac{\ell}{m}\bfA\t\bfb
       \end{equation*}
  and
   \begin{equation*}
         \left(\frac{\sum_{i=1}^k \Lambda_i\bfL\t\bfL +  \bfA\t\bfW_i\bfW_i\t\bfA}{k}\right)^{-1}\arras \frac{m}{\ell}\left(\bfA\t\bfA + \lambda \bfL\t\bfL\right)^{-1},
       \end{equation*}
   and thus we conclude that \begin{equation*}
     \bfx_k \arras \left(\bfA\t\bfA + \lambda \bfL\t\bfL\right)^{-1} \bfA\t\bfb = \bfx(\lambda).
   \end{equation*}
   \end{enumerate}
 \end{proof}

 \begin{proof}[Proof of Theorem \ref{thm:scwr}]
 Notice that for random cyclic sampling schemes and for any iteration $jM$, $\sum_{i =1}^{jM}\bfA\t\bfW_{\tau{(i)}}\bfW_{\tau{(i)}}\t\bfA = j \bfA\t\bfA$ and $\sum_{i =1}^{jM}\bfA\t\bfW_{\tau{(i)}}\bfW_{\tau{(i)}}\t\bfb = j  \bfA\t\bfb$ are deterministic. Hence
   $$
     \bfy_{jM} = j  \left( \lambda \bfL\t\bfL + j  \bfA\t\bfA \right)^{-1}  \bfA\t\bfb = \left( \frac{\lambda}{j } \bfL\t\bfL + \bfA\t\bfA \right)^{-1} \bfA\t\bfb = \bfx\left(\tfrac{1}{j}\lambda \right)
   $$
 and
   $$  \bfx_{jM} = j  \left( \lambda_{jM} \bfL\t\bfL + j \bfA\t\bfA \right)^{-1}  \bfA\t\bfb = \left( \frac{\lambda_{jM}}{j } \bfL\t\bfL + \bfA\t\bfA \right)^{-1} \bfA\t\bfb =
      \bfx\left(\tfrac{1}{j}\lambda_{j M}\right).
 $$
 \end{proof}

 \section{Derivations for sampled UPRE and sampled GCV}\label{sec:regparams}
 In this section, we provide derivations for \eqref{eqn:sampledUPRE} and \eqref{eq:sampledGCV}. To estimate the overall regularization parameter $\lambda$ at the $k$-th iteration we are just required to update $\Lambda_k$ since the estimate $\lambda$ is uniquely determined by the preceding $\Lambda_i$'s, $i = 1,\ldots,k-1$ and $\Lambda_k$. Hence, for ease of notation we will drop the iteration count on $\lambda$.

 \subsection{Derivation of the Sampled UPRE}
 \label{sec:SUPRE}
 The basic idea is to find $\Lambda_k$ by minimizing an estimate of the predictive error. Let the \emph{sampled predictive error} be given by
 \begin{equation*}
   P(\lambda) = \norm[2]{\bfW_{\tau(k)}\t(\bfA \bfx_k(\lambda) -\bfA\bfx_{\true})}^2.
 \end{equation*}
 Using the notation from \eqref{eqn:regularized}, the expected sampled predictive error, $\bbE\, P(\lambda)$, can be written as
 \begin{equation}
   \label{eq:derivE}
  \bbE  \norm[2]{\bfW_{\tau(k)}\t\left(\bfA \bfC_k(\lambda)-\bfI_m\right)\bfA \bfx_{\true}}^2 + \sigma^2  \bbE\,\trace{ \bfC_k(\lambda)\t\bfA\t\bfW_{\tau(k)} \bfW_{\tau(k)}\t\bfA\bfC_k(\lambda)},
 \end{equation}
 where the mixed term vanishes due to independence of $\bfW_{\tau(1)}, \ldots \bfW_{\tau(k)}$ and $\bfepsilon$ and since $\bbE \bfepsilon = \bf0$.  Similar to the derivation for standard UPRE, the predictive error is not computable in practice since $\bfx_{\true}$ is not available.  Thus, we perform a similar calculation for the expected sampled residual norm,
 \begin{align}
   \bbE &\ \norm[2]{\bfW_{\tau(k)}\t\left(\bfA \bfx_k(\lambda) -\bfb\right)}^2  =
   \bbE  \norm[2]{\bfW_{\tau(k)}\t(\bfA \bfC_k(\lambda)-\bfI_m)\bfb}^2 \nonumber \\
   & \hspace*{3ex}=
   \bbE  \norm[2]{\bfW_{\tau(k)}\t(\bfA\bfC_k(\lambda)-\bfI_m)\bfA \bfx_{\true}}^2  + \bbE \norm[2]{\bfW_{\tau(k)}\t(\bfA\bfC_k(\lambda)-\bfI_m)\bfepsilon}^2. \label{eq:sampledresUPRE}
 \end{align}
 Next, notice that using the trace lemma for a symmetric matrix \cite{Bardsley2018}, the second term in~\eqref{eq:sampledresUPRE} can be written as
 \begin{equation} \label{eq:derivation1}
 \sigma^2 \Big(\bbE\, \trace{\bfC_k(\lambda)\t\bfA\t \bfW_{\tau(k)} \bfW_{\tau(k)}\t\bfA\bfC_k(\lambda)}  -2\, \bbE\, \trace{\bfW_{\tau(k)} \bfW_{\tau(k)}\t\bfA\bfC_k(\lambda)} + \ell \Big).
 \end{equation}
 Combining~\eqref{eq:derivE} with \eqref{eq:sampledresUPRE} and~\eqref{eq:derivation1}, we get
 \begin{equation*}
   \bbE P(\lambda) = \bbE  \norm[2]{\bfW_{\tau(k)}\t\left(\bfA \bfx_k(\lambda) -\bfb\right)}^2 + 2 \sigma^2\, \bbE\, \trace{\bfW_{\tau(k)} \bfW_{\tau(k)}\t \bfA\bfC_k(\lambda)} - \sigma^2 \ell.
 \end{equation*}
 Finally for a given realization, we get an
  estimator for the predictive risk
 \begin{equation*}
   U_k(\lambda) = \norm[2]{\bfW_{\tau(k)}\t\left(\bfA \bfx_k(\lambda) -\bfb\right)}^2 + 2 \sigma^2 \trace{\bfW_{\tau(k)}\t \bfA\bfC_k(\lambda)\bfW_{\tau(k)}} -  \sigma^2 \ell\,,
 \end{equation*}
 which is equivalent to \eqref{eqn:sampledUPRE}.

 \subsection{Derivation of the Sampled GCV}
 \label{sec:SGCV}
 Next, we derive the sampled generalized cross validation function, following a similar derivation of the cross validation and generalized cross validation function found in \cite{golub1979generalized}. For notational simplicity, we denote $\bfA_{\tau(i)} = \bfW_{\tau(i)}\t \bfA$ and $\bfb_{\tau(i)} = \bfW_{\tau(i)}\t \bfb$.
 Then, notice that the $k$-th iterate of {\tt sTik}, which is given by $\bfx_k(\lambda) = \bfC_k(\lambda)\bfb$
 is the solution to the following problem,
 \begin{equation*}
   \min_\bfx \norm[2]{\bfA_{\tau(k)}\bfx -\bfb_{\tau(k)}}^2 + \lambda \norm[2]{\bfL \bfx}^2 + \norm[2]{\begin{bmatrix}\bfA_{\tau(1)} \\ \vdots\\\bfA_{\tau(k-1)} \end{bmatrix} \bfx - \begin{bmatrix}\bfb_{\tau(1)} \\ \vdots\\\bfb_{\tau(k-1)} \end{bmatrix}}^2\,.
 \end{equation*}

 To derive sampled GCV, at the $k$-th iterate, define the $\ell \times \ell$ identity matrix with $0$ is the $j$-th entry, i.e.,
 \begin{equation*}
   \bfE_j = \bfI_\ell - \bfe_j\t\bfe_j,
 \end{equation*}
 here $\bfe_j$ is the $j$-th column of the identity matrix.  Our goal is to find $\bfx_{[j]}(\lambda)$, which is the solution to
 \begin{equation*}
   \label{eq:xksolution}
   \min_\bfx \norm[2]{\bfE_j\left(\bfA_{\tau(k)}\bfx -\bfb_{\tau(k)}\right)}^2 + \lambda \norm[2]{\bfL \bfx}^2 + \norm[2]{\begin{bmatrix}\bfA_{\tau(1)} \\ \vdots\\\bfA_{\tau(k-1)} \end{bmatrix} \bfx - \begin{bmatrix}\bfb_{\tau(1)} \\ \vdots\\\bfb_{\tau(k-1)} \end{bmatrix}}^2.
 \end{equation*}
 Then, the sampled cross-validation estimate for $\lambda$ minimizes the average error,
 \begin{equation*}
   V_k(\lambda) = \frac{1}{\ell} \sum_{j=1}^\ell \left(\bfe_j\t\bfb_{\tau(k)} - \bfe_j\t \bfA_{\tau(k)}  \bfx_{[j]}(\lambda)\right)^2.
 \end{equation*}
 Using the normal equations and the fact that $\bfE_j\t \bfE_j = \bfE_j$, an explicit expression for $\bfx_{[j]}(\lambda)$ is given as
 \begin{align*}
   \bfx_{[j]}(\lambda) & = \left(\bfA_{\tau(k)}\t \bfE_j\t \bfE_j \bfA_{\tau(k)} + \lambda\bfL\t \bfL + \sum_{i=1}^{k-1}\bfA_{\tau(i)}\t\bfA_{\tau(i)}\right)^{-1} \left( \bfA_{\tau(k)}\t\bfE_j\t \bfE_j \bfb_{\tau(k)} + \sum_{i=1}^{k-1}\bfA_{\tau(i)}\t\bfb_{\tau(i)} \right)\\
   & =\left(\bfB_k(\lambda)^{-1} - \bfA_{\tau(i)}\t\bfe_j \bfe_j\t\bfA_{\tau(i)}\right)^{-1} \left(\sum_{i=1}^{k}\bfA_{\tau(i)}\t\bfb_{\tau(i)} -\bfA_{\tau(k)}\t\bfe_j \bfe_j\t \bfb_{\tau(k)} \right),
 \end{align*}
 where $\bfB_k(\lambda) = \left(\lambda\bfL\t \bfL + \sum_{i=1}^{k}\bfA_{\tau(i)}\t\bfA_{\tau(i)}\right)^{-1}$.
 Next defining $t_{jj} = \bfe_j \t \bfA_{\tau(k)} \bfB_k(\lambda)\bfA_{\tau(k)}\t \bfe_j$ and using the Sherman-Morrison-Woodbury formula, we get
 \begin{equation*}
   \left(\bfB_k(\lambda)^{-1} - \bfA_{\tau(i)}\t\bfe_j \bfe_j\t\bfA_{\tau(i)}\right)^{-1} = \tfrac{1}{1-t_{jj}}\left((1-t_{jj})\bfB_k(\lambda) + \bfB_k(\lambda) \bfA_{\tau(k)}\t \bfe_j \bfe_j\t \bfA_{\tau(k)} \bfB_k(\lambda)\right)
 \end{equation*}
 and after some algebraic manipulations, we arrive at
 \begin{equation*}
   \bfe_j\t\bfA_{\tau(k)}  \bfx_{[j]}(\lambda)  = \frac{1}{1-t_{jj}} \left(\bfe_j\t \bfA_{\tau(k)} \bfC_k(\lambda)\bfb - t_{jj} \bfe_j\t \bfb_{\tau(k)} \right).
 \end{equation*}
 Thus,
 $$\bfe_j\t\bfb_{\tau(k)} - \bfe_j\t \bfA_{\tau(k)}\bfx_{[j]}(\lambda) = \frac{1}{1-t_{jj}} \bfe_j\t \left(\bfb_{\tau(k)} - \bfA_{\tau(k)}\bfx_k(\lambda)\right)$$
 and we can write the sampled cross-validation function as
 $$
   V_k(\lambda) = \frac{1}{\ell}\norm[2]{\bfD_k(\lambda)(\bfb_{\tau(k)} - \bfA_{\tau(k)}\bfx_k(\lambda))}^2,
 $$
 where $\bfD_k(\lambda)=\diag{\frac{1}{1-t_{11}},\ldots, \frac{1}{1-t_{\ell\ell}}}$.
 Now the extension from the sampled cross-validation to the sampled generalized cross validation function is analogous to the generalization process from cross-validation to GCV provided in \cite{golub1979generalized}.

\section*{Acknowledgments}
This work was partially supported by NSF DMS 1723005 (J. Chung, M. Chung, Tenorio) and NSF DMS 1654175 (J. Chung).

\section*{References}
\bibliographystyle{plain}
\bibliography{references2018}

\end{document}